\newtheorem{theorem}{Theorem}
\theoremstyle{plain}
\newtheorem{corollary}[theorem]{Corollary}
\newtheorem{definition}[theorem]{Definition}
\newtheorem{example}[theorem]{Example}
\newtheorem{lemma}[theorem]{Lemma}
\newtheorem{problem}[theorem]{Problem}
\newtheorem{remark}[theorem]{Remark}
\numberwithin{equation}{section}
\begin{document}
\title[Control of $BICT$]{Control of the Bilinear Indicator Cube Testing
property}

\begin{abstract}
We show that the $\alpha $-fractional Bilinear Indicator/Cube Testing
Constant%
\begin{equation*}
\mathcal{BICT}_{T^{\alpha }}\left( \sigma ,\omega \right) \equiv \sup_{Q\in 
\mathcal{P}^{n}}\sup_{E,F\subset Q}\frac{1}{\sqrt{\left\vert Q\right\vert
_{\sigma }\left\vert Q\right\vert _{\omega }}}\left\vert \int_{F}T_{\sigma
}^{\alpha }\left( \mathbf{1}_{E}\right) \omega \right\vert ,
\end{equation*}%
defined for any $\alpha $-fractional singular integral $T^{\alpha }$ on $%
\mathbb{R}^{n}$ with $0<\alpha <n$, is controlled by the classical $\alpha $%
-fractional Muckenhoupt constant $A_{2}^{\alpha }\left( \sigma ,\omega
\right) $, provided the product measure $\sigma \times \omega $ is
diagonally reverse doubling (in particular if it is reverse doubling) with
exponent exceeding $2\left( n-\alpha \right) $.

Moreover, this control is sharp within the class of diagonally reverse
doubling product measures. In fact, every product measure $\mu \times \mu $,
where $\mu $ is an Ahlfors-David regular measure $\mu $ with exponent $%
n-\alpha $, has diagonal exponent $2\left( n-\alpha \right) $ and satisfies $%
A_{2}^{\alpha }\left( \mu ,\mu \right) <\infty $ and $\mathcal{BICT}%
_{I^{\alpha }}\left( \mu ,\mu \right) =\infty $, which in paricular has
implications for the $L^{2}$ trace inequality of the fractional integral $I^{%
\frac{\alpha }{2}}$ on domains with fractional boundary.

When combined with the main results in \texttt{arXiv:1906.05602}, \texttt{%
1907.07571}\ and \texttt{1907.10734}, the above control of $\mathcal{BICT}%
_{T^{\alpha }}$ for $\alpha >0$ yields a two weight $T1$ theorem for
doubling weights with appropriate diagonal reverse doubling, i.e. the norm
inequality for $T^{\alpha }$ is controlled by cube testing constants and the 
$\alpha $-fractional one-tailed Muckenhoupt constants $\mathcal{A}%
_{2}^{\alpha }$ (without any energy assumptions), and also yields a
corresponding cancellation condition theorem for the kernel of $T^{\alpha }$%
, both of which hold for arbitrary $\alpha $-fractional Calder\'{o}n-Zygmund
operators $T^{\alpha }$.

We do not know if the analogous result for $\mathcal{BICT}_{H}\left( \sigma
,\omega \right) $ holds for the Hilbert transform $H$ in case $\alpha =0$,
but we show that $\mathcal{BICT}_{H^{\limfunc{dy}}}\left( \sigma ,\omega
\right) $ is \textbf{not} controlled by the Muckenhoupt condition $\mathcal{A%
}_{2}^{\alpha }\left( \omega ,\sigma \right) $ for the dyadic Hilbert
transform $H^{\limfunc{dy}}$ and doubling weights $\sigma ,\omega $.
\end{abstract}

\author[E.T. Sawyer]{Eric T. Sawyer}
\address{ Department of Mathematics \& Statistics, McMaster University, 1280
Main Street West, Hamilton, Ontario, Canada L8S 4K1 }
\thanks{E. Sawyer has been partially supported by an NSERC grant, and a
startup\ grant from McMaster University for the McKay professorship.}
\author[I. Uriarte-Tuero]{Ignacio Uriarte-Tuero}
\address{ Department of Mathematics \\
Michigan State University \\
East Lansing MI }
\email{ignacio@math.msu.edu}
\thanks{I. Uriarte-Tuero has been partially supported by grant
MTM2015-65792-P (MINECO, Spain), and by a Simons Foundation Collaboration
Grant for Mathematicians, Award Number: 637221.}
\email{sawyer@mcmaster.ca}
\maketitle
\tableofcontents

\section{Introduction}

We give precise statements of our main results in Subsection \ref{main}
below, but first we recall the definitions of doubling, reverse doubling,
Muckenhoupt conditions and Poisson integrals; then the notion of weighted
norm inequality for a standard singular integral, and the associated testing
conditions; and finally the Bilinear Indicator/Cube Testing theorem from 
\cite{Saw2}, \cite{Saw3} and \cite{Saw4}.

\subsection{Definitions}

Denote by $\mathcal{P}^{n}$ the collection of cubes in $\mathbb{R}^{n}$
having sides parallel to the coordinate axes. A positive locally finite
Borel measure $\mu $ on $\mathbb{R}^{n}$ is said to satisfy the\emph{\
doubling condition} if there is a pair of constants $\left( \beta ,\gamma
\right) \in \left( 0,1\right) ^{2}$, called doubling parameters, such that
with $\left\vert Q\right\vert _{\mu }=\mu \left( Q\right) $, 
\begin{equation}
\left\vert \beta Q\right\vert _{\mu }\geq \gamma \left\vert Q\right\vert
_{\mu }\ ,\ \ \ \ \ \text{for all cubes }Q\in \mathcal{P}^{n},
\label{def doub}
\end{equation}%
and the \emph{reverse doubling condition} if there is a pair of constants $%
\left( \beta ,\gamma \right) \in \left( 0,1\right) ^{2}$, called reverse
doubling parameters, such that%
\begin{equation}
\left\vert \beta Q\right\vert _{\mu }\leq \gamma \left\vert Q\right\vert
_{\mu }\ ,\ \ \ \ \ \text{for all cubes }Q\in \mathcal{P}^{n}.
\label{def rev doub'}
\end{equation}%
Note that the inequality in (\ref{def rev doub'}) has been reversed from
that in the definition of the doubling condition in (\ref{def doub}).

A familiar equivalent reformulation of (\ref{def doub}) is that there is a
positive constant $C_{\limfunc{doub}}$, called the \emph{doubling constant},
and a positive constant $C$, such that $\left\vert 2Q\right\vert _{\mu }\leq
C_{\limfunc{doub}}\left\vert Q\right\vert _{\mu }$ for all cubes $Q\in 
\mathcal{P}^{n}$. More important for us is yet another characterization that
follows by iterating (\ref{def doub}): $\mu $ is doubling if and only if
there exists a\ positive constant $\theta _{\mu }^{\limfunc{doub}}$, called
a \emph{doubling exponent}, such that 
\begin{equation*}
\sup_{Q\in \mathcal{P}^{n}}\frac{\left\vert tQ\right\vert _{\mu }}{%
\left\vert Q\right\vert _{\mu }}\leq t^{\theta _{\mu }^{\limfunc{doub}}},\ \
\ \ \ \text{for all sufficiently large }t<\infty .
\end{equation*}%
Similarly there is the analogous reformulation of (\ref{def rev doub'}): $%
\mu $ is reverse doubling if and only if there exists a positive constant $%
\theta _{\mu }^{\limfunc{rev}}$, called a \emph{reverse doubling exponent},
and a positive constant $C$, such that%
\begin{equation*}
\sup_{Q\in \mathcal{P}^{n}}\frac{\left\vert sQ\right\vert _{\mu }}{%
\left\vert Q\right\vert _{\mu }}\leq s^{\theta _{\mu }^{\limfunc{rev}}},\ \
\ \ \ \text{for all sufficiently small }s>0.
\end{equation*}%
A doubling exponent $\theta _{\mu }^{\limfunc{doub}}$ of a doubling measure $%
\mu $ is necessarily large, namely $\theta _{\mu }^{\limfunc{doub}}\geq n$,
and a reverse doubling exponent $\theta _{\mu }^{\limfunc{rev}}$ of a
reverse doubling measure $\mu $ is necessarily small, namely $\theta _{\mu
}^{\limfunc{rev}}\leq n$, with Lebesgue measure satisfying the extreme case $%
\theta _{dx}^{\limfunc{rev}}=n=\theta _{dx}^{\limfunc{doub}}$. Indeed, with $%
\Omega _{N}\equiv \left\{ \alpha \in \mathbb{N}^{n}:0\leq \alpha _{i}\leq
N-1\right\} $, we have for $k$ large,%
\begin{equation*}
3^{kn}\left\vert 3^{k}Q\right\vert _{\mu }\leq \sum_{\alpha \in \Omega
_{3^{k}}}\left\vert 3^{k+1}\left( Q+\ell \left( Q\right) \alpha \right)
\right\vert _{\mu }\leq \sum_{\alpha \in \Omega _{3^{k}}}3^{\left(
k+1\right) \theta _{\mu }^{\limfunc{doub}}}\left\vert Q+\ell \left( Q\right)
\alpha \right\vert _{\mu }\leq 3^{\left( k+1\right) \theta _{\mu }^{\limfunc{%
doub}}}\left\vert 3^{k}Q\right\vert _{\mu }\ ,
\end{equation*}%
which implies $\theta _{\mu }^{\limfunc{doub}}\geq n$. Similarly $\theta
_{\mu }^{\limfunc{rev}}\leq n$.

Finally it is well known that doubling implies reverse doubling. Indeed,
assuming $t\geq 5$ in the definition of $\theta _{\mu }^{\limfunc{doub}}$,
we obtain for any cube $Q$ in a dyadic grid $\mathcal{D}$,%
\begin{eqnarray*}
\left\vert 3Q\setminus Q\right\vert _{\mu } &=&\sum_{I\in \mathcal{D}:\
I\subset 3Q\setminus Q,\ell \left( I\right) =\ell \left( Q\right)
}\left\vert I\right\vert _{\mu }\geq \sum_{I\in \mathcal{D}:\ I\subset
3Q\setminus Q,\ell \left( I\right) =\ell \left( Q\right) }5^{-\theta _{\mu
}^{\limfunc{doub}}}\left\vert 5I\right\vert _{\mu }\geq \left(
3^{n}-1\right) 5^{-\theta _{\mu }^{\limfunc{doub}}}\left\vert Q\right\vert
_{\mu } \\
&\Longrightarrow &\left\vert Q\right\vert _{\mu }=\left\vert 3Q\right\vert
_{\mu }-\left\vert 3Q\setminus Q\right\vert _{\mu }\leq \left( 1-\frac{%
3^{n}-1}{5^{\theta _{\mu }^{\limfunc{doub}}}}\right) \left\vert
3Q\right\vert _{\mu }\ ,
\end{eqnarray*}%
with a similar inequality for larger $t$. The converse fails since in
particular, reverse doubling measures can vanish on open sets, see Example %
\ref{ex fail} below, while doubling measures cannot.

Let $\sigma $ and $\omega $ be locally finite positive Borel measures on $%
\mathbb{R}^{n}$, and denote by $\mathcal{P}^{n}$ the collection of all cubes
in $\mathbb{R}^{n}$ with sides parallel to the coordinate axes. For $0\leq
\alpha <n$, the classical $\alpha $-fractional Muckenhoupt condition for the
weight pair $\left( \sigma ,\omega \right) $ is given by%
\begin{equation}
A_{2}^{\alpha }\left( \sigma ,\omega \right) \equiv \sup_{Q\in \mathcal{P}%
^{n}}\frac{\left\vert Q\right\vert _{\sigma }}{\left\vert Q\right\vert ^{1-%
\frac{\alpha }{n}}}\frac{\left\vert Q\right\vert _{\omega }}{\left\vert
Q\right\vert ^{1-\frac{\alpha }{n}}}<\infty ,  \label{frac Muck}
\end{equation}%
and the corresponding one-tailed condition by%
\begin{equation}
\mathcal{A}_{2}^{\alpha }\left( \sigma ,\omega \right) \equiv \sup_{Q\in 
\mathcal{Q}^{n}}\mathcal{P}^{\alpha }\left( Q,\sigma \right) \frac{%
\left\vert Q\right\vert _{\omega }}{\left\vert Q\right\vert ^{1-\frac{\alpha 
}{n}}}<\infty ,  \label{one-sided}
\end{equation}%
where the reproducing Poisson integral $\mathcal{P}^{\alpha }$ is given by%
\begin{equation*}
\mathcal{P}^{\alpha }\left( Q,\mu \right) \equiv \int_{\mathbb{R}^{n}}\left( 
\frac{\left\vert Q\right\vert ^{\frac{1}{n}}}{\left( \left\vert Q\right\vert
^{\frac{1}{n}}+\left\vert x-x_{Q}\right\vert \right) ^{2}}\right) ^{n-\alpha
}d\mu \left( x\right) .
\end{equation*}

\subsection{Standard fractional singular integrals, the norm inequality and
testing conditions}

Let $0\leq \alpha <n$ and $\kappa _{1},\kappa _{2}\in \mathbb{N}$. We define
a standard $\left( \kappa _{1}+\delta ,\kappa _{2}+\delta \right) $-smooth $%
\alpha $-fractional CZ kernel $K^{\alpha }(x,y)$ to be a function $K^{\alpha
}:\mathbb{R}^{n}\times \mathbb{R}^{n}\rightarrow \mathbb{R}$ satisfying the
following fractional size and smoothness conditions for some $\delta >0$:
For $x\neq y$, and with $\nabla _{1}$ denoting gradient in the first
variable, and $\nabla _{2}$ denoting gradient in the second variable, 
\begin{eqnarray}
&&\left\vert \nabla _{1}^{j}K^{\alpha }\left( x,y\right) \right\vert \leq
C_{CZ}\left\vert x-y\right\vert ^{\alpha -j-n},\ \ \ \ \ 0\leq j\leq \kappa
_{1},  \label{sizeandsmoothness'} \\
&&\left\vert \nabla _{1}^{\kappa _{1}}K^{\alpha }\left( x,y\right) -\nabla
_{1}^{\kappa _{1}}K^{\alpha }\left( x^{\prime },y\right) \right\vert \leq
C_{CZ}\left( \frac{\left\vert x-x^{\prime }\right\vert }{\left\vert
x-y\right\vert }\right) ^{\delta }\left\vert x-y\right\vert ^{\alpha -\kappa
_{1}-n},\ \ \ \ \ \frac{\left\vert x-x^{\prime }\right\vert }{\left\vert
x-y\right\vert }\leq \frac{1}{2},  \notag
\end{eqnarray}%
and where the same inequalities hold for the adjoint kernel $K^{\alpha ,\ast
}\left( x,y\right) \equiv K^{\alpha }\left( y,x\right) $, in which $x$ and $%
y $ are interchanged, and where $\kappa _{1}$ is replaced by $\kappa _{2}$,
and $\nabla _{1}$\ by $\nabla _{2}$.

If $T^{\alpha }$ is the $\alpha $-fractional singular integral operator
associated with the CZ kernel $K^{\alpha }$, then the norm constant $%
\mathfrak{N}_{T^{\alpha }}=\mathfrak{N}_{T^{\alpha }}\left( \sigma ,\omega
\right) $ is the least constant in the two weight norm inequality%
\begin{equation}
\left( \int_{\mathbb{R}^{n}}\left\vert T^{\alpha }\left( f\sigma \right)
\right\vert ^{2}d\omega \right) ^{\frac{1}{2}}\leq \mathfrak{N}_{T^{\alpha
}}\left( \sigma ,\omega \right) \left( \int_{\mathbb{R}^{n}}\left\vert
f\right\vert ^{2}d\sigma \right) ^{\frac{1}{2}},  \label{norm}
\end{equation}%
taken over all suitable truncations, see e.g. \cite{SaShUr7}.

The \emph{cube testing conditions} associated with an $\alpha $-fractional
singular integral operator $T^{\alpha }$ introduced in \cite{SaShUr7} are
given by%
\begin{eqnarray*}
\left( \mathfrak{T}_{T^{\alpha }}\left( \sigma ,\omega \right) \right) ^{2}
&\equiv &\sup_{Q\in \mathcal{P}^{n}}\frac{1}{\left\vert Q\right\vert
_{\sigma }}\int_{Q}\left\vert T_{\sigma }^{\alpha }\mathbf{1}_{Q}\right\vert
^{2}\omega <\infty , \\
\left( \mathfrak{T}_{\left( T^{\alpha }\right) ^{\ast }}\left( \omega
,\sigma \right) \right) ^{2} &\equiv &\sup_{Q\in \mathcal{P}^{n}}\frac{1}{%
\left\vert Q\right\vert _{\omega }}\int_{Q}\left\vert \left( T_{\sigma
}^{\alpha }\right) ^{\ast }\mathbf{1}_{Q}\right\vert ^{2}\sigma <\infty ,
\end{eqnarray*}

\subsection{The $\mathcal{BICT}$ theorem}

The Bilinear Indicator/Cube Testing property is%
\begin{equation}
\mathcal{BICT}_{T^{\alpha }}\left( \sigma ,\omega \right) \equiv \sup_{Q\in 
\mathcal{P}^{n}}\sup_{E,F\subset Q}\frac{1}{\sqrt{\left\vert Q\right\vert
_{\sigma }\left\vert Q\right\vert _{\omega }}}\left\vert \int_{F}T_{\sigma
}^{\alpha }\left( \mathbf{1}_{E}\right) \omega \right\vert <\infty ,
\label{def ind WBP}
\end{equation}%
where the second supremum is taken over all compact sets $E$ and $F$
contained in a cube $Q$. In \cite{Saw2}, \cite{Saw3} and \cite{Saw4} it is
shown that for doubling weights, the cube testing conditions, the one-tailed
Muckenhoupt conditions, and the bilinear indicator/cube testing property are
sufficient for the norm inequality of an $\alpha $-fractional CZ operator.
In that theorem, the kernel must satisfy smoothness conditions related to
the order of vanishing moments of the weighted Alpert wavelets used (see 
\cite{RaSaWi}), which in turn depend on the doubling exponents of the
weights.

\begin{theorem}[\protect\cite{Saw2}, \protect\cite{Saw3} and \protect\cite%
{Saw4}]
\label{BICT theorem}Suppose that $\sigma $ and $\omega $ are locally finite
positive \emph{doubling} Borel measures on $\mathbb{R}^{n}$. Let $0\leq
\alpha <n$. Suppose also that $T^{\alpha }$ is a standard $\left( \kappa
_{1}+\delta ,\kappa _{2}+\delta \right) $-smooth $\alpha $-fractional Calder%
\'{o}n-Zygmund singular integral in $\mathbb{R}^{n}$, where $\kappa
_{1}>\theta _{\sigma }^{\limfunc{doub}}$ and $\kappa _{2}>\theta _{\omega }^{%
\limfunc{doub}}$ exceed the doubling exponents of $\sigma $ and $\omega $.
In the case $\alpha =0$, we also assume that $T^{0}$ is bounded on
unweighted $L^{2}\left( \mathbb{R}^{n}\right) $. Then%
\begin{equation*}
\mathfrak{N}_{T^{\alpha }}\left( \sigma ,\omega \right) \lesssim \mathfrak{T}%
_{T^{\alpha }}\left( \sigma ,\omega \right) +\mathfrak{T}_{T^{\alpha ,\ast
}}\left( \omega ,\sigma \right) +\mathcal{A}_{2}^{\alpha }\left( \sigma
,\omega \right) +\mathcal{A}_{2}^{\alpha }\left( \omega ,\sigma \right) +%
\mathcal{BICT}_{T^{\alpha }}\left( \sigma ,\omega \right) ,
\end{equation*}%
where the implied constant depends only $\alpha $, $n$, and the doubling
constants of the measures. Moreover, if in addition one of the measures is
an $A_{\infty }$ weight (and if $T^{0}$ is also bounded on \emph{unweighted} 
$L^{2}\left( \mathbb{R}^{n}\right) $ in the case $\alpha =0$), then the
bilinear indicator/cube testing property can be dropped, 
\begin{equation*}
\mathfrak{N}_{T^{\alpha }}\left( \sigma ,\omega \right) \lesssim \mathfrak{T}%
_{T^{\alpha }}\left( \sigma ,\omega \right) +\mathfrak{T}_{T^{\alpha ,\ast
}}\left( \omega ,\sigma \right) +\mathcal{A}_{2}^{\alpha }\left( \sigma
,\omega \right) +\mathcal{A}_{2}^{\alpha }\left( \omega ,\sigma \right) ,
\end{equation*}%
and in terms of cancellation conditions on the kernel $K^{\alpha }\left(
x,y\right) $ of $T^{\alpha }$, we have%
\begin{equation*}
\mathfrak{N}_{T^{\alpha }}\left( \sigma ,\omega \right) \lesssim \mathfrak{A}%
_{K^{\alpha }}\left( \sigma ,\omega \right) +\mathfrak{A}_{K^{\alpha ,\ast
}}\left( \omega ,\sigma \right) +\mathcal{A}_{2}^{\alpha }\left( \sigma
,\omega \right) +\mathcal{A}_{2}^{\alpha }\left( \omega ,\sigma \right) ,
\end{equation*}%
where $\mathfrak{A}_{K^{\alpha }}\left( \sigma ,\omega \right) $ and $%
\mathfrak{A}_{K^{\alpha ,\ast }}\left( \omega ,\sigma \right) $ denote the
least positive constants so that%
\begin{eqnarray}
&&\int_{\left\vert x-x_{0}\right\vert <N}\left\vert \int_{\varepsilon
<\left\vert x-y\right\vert <N}K^{\alpha }\left( x,y\right) d\sigma \left(
y\right) \right\vert ^{2}d\omega \left( x\right) \leq \mathfrak{A}%
_{K^{\alpha }}\left( \sigma ,\omega \right) \ \int_{\left\vert
x_{0}-y\right\vert <N}d\sigma \left( y\right) ,  \label{can cond} \\
&&\ \ \ \ \ \ \ \ \ \ \ \ \ \ \ \ \ \ \ \ \ \ \ \ \ \text{for all }%
0<\varepsilon <N\text{ and }x_{0}\in \mathbb{R}^{n},  \notag
\end{eqnarray}%
along with a similar inequality with constant $\mathfrak{A}_{K^{\alpha ,\ast
}}\left( \omega ,\sigma \right) $, in which the measures $\sigma $ and $%
\omega $ are interchanged and $K^{\alpha }\left( x,y\right) $ is replaced by 
$K^{\alpha ,\ast }\left( x,y\right) =K^{\alpha }\left( y,x\right) $.
\end{theorem}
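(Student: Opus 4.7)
The plan is to reduce the two-weight norm inequality (\ref{norm}) to a dyadic bilinear form. I would apply the random dyadic grid machinery of Nazarov-Treil-Volberg together with the weighted Alpert wavelet expansions of Rahm-Sawyer-Wick \cite{RaSaWi}, expanding $f$ in the $\sigma$-Alpert basis of order $\kappa_1$ and $g$ in the $\omega$-Alpert basis of order $\kappa_2$. Because $\kappa_1$ and $\kappa_2$ strictly exceed the doubling exponents of $\sigma$ and $\omega$, these wavelets have enough vanishing moments for the off-diagonal kernel estimates to close. After the usual good/bad cube reduction, the problem becomes estimating $\sum_{I,J} \langle T_\sigma^\alpha\, \Delta_I^{\sigma} f,\, \Delta_J^{\omega} g\rangle_\omega$, summed over pairs of good dyadic cubes, by the right-hand side times $\|f\|_{L^2(\sigma)}\|g\|_{L^2(\omega)}$.

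I would then partition the good pairs into three standard regimes: (i) well-separated pairs, (ii) deeply nested pairs, and (iii) roughly comparable ``diagonal'' pairs. For regime (i), vanishing moments of order exceeding the doubling exponent combine with the CZ smoothness to give a Poisson-type bound on $|\langle T_\sigma^\alpha\, \Delta_I^\sigma, \Delta_J^\omega\rangle_\omega|$ which sums, using doubling, against $\mathcal{A}_2^\alpha(\sigma,\omega)+\mathcal{A}_2^\alpha(\omega,\sigma)$ without any energy hypothesis --- this is where the smoothness-exceeds-doubling hypothesis is used crucially, since Alpert cancellation of large enough order trades one Poisson factor for a fractional power of the scale ratio that is summable over doubling measures. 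For regime (ii), a corona/stopping-time decomposition based on $\sigma$- and $\omega$-averages produces a paraproduct form, a stopping form, and a neighbour form; the paraproducts are absorbed by the testing constants $\mathfrak{T}_{T^\alpha}$ and $\mathfrak{T}_{T^{\alpha,\ast}}$, and the stopping and neighbour forms again reduce to Poisson-tail estimates controlled by $\mathcal{A}_2^\alpha$.

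Regime (iii) is the main obstacle and the precise reason why $\mathcal{BICT}_{T^\alpha}$ must appear. For cubes of comparable size and location wavelet cancellation is unavailable, so the best one can say about each local pairing $\langle T_\sigma^\alpha \mathbf{1}_E, \mathbf{1}_F\rangle_\omega$ is exactly the bound $\mathcal{BICT}_{T^\alpha}\sqrt{|Q|_\sigma|Q|_\omega}$ on a common ambient cube $Q$. Summing via quasi-orthogonality of Alpert projections and a Carleson-type packing estimate (available because of doubling) yields the $\mathcal{BICT}_{T^\alpha}$ term in the first inequality. When in addition one of the measures is $A_\infty$, I would use a good-$\lambda$/John-Nirenberg argument to replace $\mathbf{1}_E,\mathbf{1}_F$ by $\mathbf{1}_Q$ up to controlled errors, absorbing the diagonal pairing into the cube testing and one-tailed Muckenhoupt constants; this is the mechanism that allows $\mathcal{BICT}_{T^\alpha}$ to drop out of the second displayed inequality.

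For the cancellation inequality, the constant $\mathfrak{A}_{K^\alpha}(\sigma,\omega)$ directly dominates $\mathfrak{T}_{T^\alpha}(\sigma,\omega)$ by placing $x_0$ at the centre of the testing cube and exhausting the scales of $Q$ via $\varepsilon\to 0$ and $N\to\infty$, with a standard smooth-truncation argument handling the principal-value issues, and symmetrically for $\mathfrak{A}_{K^{\alpha,\ast}}$. This reduces the cancellation statement to the second inequality. The hard part of the whole program is the delicate interplay in regimes (i) and (ii) between doubling and Alpert smoothness: one needs \emph{just enough} vanishing moments so that the off-diagonal kernel estimates, once integrated against doubling measures, close using only the one-tailed $\mathcal{A}_2^\alpha$ constants with no energy assumption whatsoever --- if the smoothness order is at or below the doubling exponent this closure fails and an energy-type condition would be forced back into the hypotheses.
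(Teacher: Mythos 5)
This theorem is not proved in the present paper at all: it is imported verbatim from \cite{Saw2}, \cite{Saw3} and \cite{Saw4}, and the only in-paper commentary is that the required kernel smoothness is tied to the vanishing moments of the weighted Alpert wavelets of \cite{RaSaWi}, which in turn depend on the doubling exponents. So there is no internal proof to compare against; your outline has to be judged on its own as a reconstruction of the cited arguments. At the level of architecture it is consistent with what those references do: random dyadic grids, Alpert expansions of orders $\kappa_{1},\kappa_{2}$ exceeding the doubling exponents, the usual splitting into separated, nested and comparable pairs, testing constants absorbing the paraproduct, and $\mathcal{BICT}_{T^{\alpha}}$ absorbing the diagonal.

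However, as a proof the proposal has a genuine gap exactly where the cited papers do their real work. You assert that in regimes (i) and (ii) the separated, stopping and neighbour forms ``reduce to Poisson-tail estimates controlled by $\mathcal{A}_{2}^{\alpha}$'' with no energy hypothesis, but in the standard NTV/corona framework the stopping form is precisely the place where an energy (or functional energy) side condition is forced; the entire point of \cite{Saw2}--\cite{Saw4} is the mechanism by which doubling of \emph{both} measures plus Alpert cancellation of order exceeding the doubling exponents eliminates that condition, and your sketch gives no such mechanism --- it simply names the conclusion. The same is true of the $A_{\infty}$ step: saying that a ``good-$\lambda$/John--Nirenberg argument'' replaces $\mathbf{1}_{E},\mathbf{1}_{F}$ by $\mathbf{1}_{Q}$ is not an argument (the relevant tool in \cite{Saw3} is a restricted weak type inequality for one $A_{\infty}$ weight, which is a theorem in its own right). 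Finally, for the cancellation conditions the direction you need is $\mathfrak{T}_{T^{\alpha}}\lesssim\mathfrak{A}_{K^{\alpha}}+\mathcal{A}_{2}^{\alpha}$ (balls versus cubes, and uniformity over truncations), which again requires a lemma rather than a remark. In short, the skeleton is right, but every load-bearing step is asserted rather than proved, so this cannot stand as a proof of the theorem.
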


This theorem raises the following problem.

\begin{problem}
\label{BICT control}Suppose that $\sigma $ and $\omega $ are locally finite
positive doubling Borel measures on $\mathbb{R}^{n}$. Let $0\leq \alpha <n$.
Suppose also that $T^{\alpha }$ is a standard $\alpha $-fractional Calder%
\'{o}n-Zygmund singular integral in $\mathbb{R}^{n}$. Is the two weight
Bilinear Indicator Cube Testing constant $\mathcal{BICT}_{T^{\alpha }}\left(
\sigma ,\omega \right) $ then controlled by the Cube Testing constants $%
\mathfrak{T}_{T^{\alpha }}\left( \sigma ,\omega \right) ,\mathfrak{T}%
_{T^{\alpha ,\ast }}\left( \omega ,\sigma \right) $ and the one-tailed
Muckenhoupt constants $\mathcal{A}_{2}^{\alpha }\left( \sigma ,\omega
\right) ,\mathcal{A}_{2}^{\alpha }\left( \omega ,\sigma \right) $? More
generally, is it true that for every $0<\varepsilon <1$,%
\begin{equation*}
\mathcal{BICT}_{T^{\alpha }}\left( \sigma ,\omega \right) \lesssim \mathfrak{%
T}_{T^{\alpha }}\left( \sigma ,\omega \right) +\mathfrak{T}_{T^{\alpha ,\ast
}}\left( \omega ,\sigma \right) +\mathcal{A}_{2}^{\alpha }\left( \sigma
,\omega \right) +\mathcal{A}_{2}^{\alpha }\left( \omega ,\sigma \right)
+\varepsilon \mathfrak{N}_{T^{\alpha }}\left( \sigma ,\omega \right) ?
\end{equation*}
\end{problem}

\subsection{Main results\label{main}}

In the next section we will give a positive answer to Problem \ref{BICT
control} for $\alpha >0$ and for certain pairs of doubling measures, without
assuming one of them is an $A_{\infty }$ weight. Instead, we assume that the
product measure $\sigma \times \omega $ is \emph{diagonally} reverse
doubling, with a bound on a \emph{diagonal}\footnote{%
`diagonal' since we test over cubes of the form $Q\times Q$ as opposed to $%
Q\times Q^{\prime }$.} reverse doubling exponent $\theta _{\sigma \times
\omega }^{\limfunc{diag}}$, where by definition $\theta _{\sigma \times
\omega }^{\limfunc{diag}}$ satisfies 
\begin{equation*}
\sup_{Q\in \mathcal{P}^{n}}\frac{\left\vert s\left( Q\times Q\right)
\right\vert _{\sigma \times \omega }}{\left\vert Q\times Q\right\vert
_{\sigma \times \omega }}=\sup_{Q\in \mathcal{P}^{n}}\frac{\left\vert
sQ\right\vert _{\sigma }\left\vert sQ\right\vert _{\omega }}{\left\vert
Q\right\vert _{\sigma }\left\vert Q\right\vert _{\omega }}\leq s^{\theta
_{\sigma \times \omega }^{\limfunc{diag}}},\ \ \ \ \ \text{for all
sufficiently small }s>0.
\end{equation*}

\begin{remark}
If $\sigma $ and $\omega $ are reverse doubling with reverse doubling
exponents $\theta _{1}$ and $\theta _{2}$ respectively, then the product
measure $\sigma \times \omega $ is reverse doubling with reverse doubling
exponent $\theta _{1}+\theta _{2}$, hence $\sigma \times \omega $ is
diagonally reverse doubling with exponent $\theta _{\sigma \times \omega }^{%
\limfunc{diag}}\geq \theta _{1}+\theta _{2}$. In particular, if just one of
the measures is reverse doubling, then the product measure is diagonally
reverse doubling with at least half the exponent.
\end{remark}

Actually we prove a bit more, namely that the two weight Bilinear Indicator
Cube Testing constant $\mathcal{BICT}_{I^{\alpha }}\left( \sigma ,\omega
\right) $ for the fractional integral operator $I^{\alpha }$ is controlled
by the classical Muckenhoupt constant $A_{2}^{\alpha }\left( \sigma ,\omega
\right) $ alone in this case. Note that when $\alpha >0$, we have $%
\left\vert T^{\alpha }\nu \right\vert \leq CI^{\alpha }\nu $ for any
positive measure $\nu $, where $I^{\alpha }$ is an example of a smooth $%
\alpha $-fractional Calder\'{o}n-Zygmund singular integral in $\mathbb{R}%
^{n} $. See the next section for more detail.

\begin{theorem}
\label{alpha}Suppose $\sigma $ and $\omega $ are locally finite positive
Borel measures on $\mathbb{R}^{n}$, and that the product measure $\sigma
\times \omega $ is diagonally reverse doubling with a diagonal reverse
doubling exponent $\theta _{\sigma \times \omega }^{\limfunc{diag}}$. Set $%
\theta =\frac{\theta _{\sigma \times \omega }^{\limfunc{diag}}}{2}$. If $%
0<\alpha <n<\theta +\alpha $, then with a constant $C=C_{\theta ,\alpha ,n}$
depending only on $\theta $, $\alpha $, and $n$, we have 
\begin{equation*}
\int_{Q}I^{\alpha }\left( \mathbf{1}_{Q}\sigma \right) d\omega \leq
C_{\theta ,\alpha ,n}\sqrt{A_{2}^{\alpha }\left( \sigma ,\omega \right) }%
\sqrt{\left\vert Q\right\vert _{\sigma }\left\vert Q\right\vert _{\omega }}%
,\ \ \ \ \ \text{for all cubes }Q\in \mathcal{P}^{n}.
\end{equation*}
\end{theorem}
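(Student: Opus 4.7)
I would use a layer--cake decomposition in the distance $|x-y|$. By Fubini,
\begin{equation*}
\int_Q I^{\alpha }(\mathbf{1}_Q\sigma )\,d\omega = \int_Q\!\int_Q \frac{d\sigma (y)\,d\omega (x)}{|x-y|^{n-\alpha }} = (n-\alpha )\int_0^{\infty } t^{\alpha -n-1}\,\Phi (t)\,dt,
\end{equation*}
where $\Phi (t)\equiv |\{(x,y)\in Q\times Q:|x-y|\le t\}|_{\sigma \times \omega }$. Split the $t$-integral at $t_0=\sqrt{n}\,\ell (Q)$. For $t\ge t_0$ one has $\Phi (t)=|Q|_{\sigma }|Q|_{\omega }$, so the tail equals a constant times $\ell (Q)^{\alpha -n}|Q|_{\sigma }|Q|_{\omega }$; combining the bound $|Q|_{\sigma }|Q|_{\omega }\le A_2^{\alpha }\ell (Q)^{2(n-\alpha )}$ with the geometric-mean identity $|Q|_{\sigma }|Q|_{\omega }=\sqrt{|Q|_{\sigma }|Q|_{\omega }}\cdot \sqrt{|Q|_{\sigma }|Q|_{\omega }}$ immediately yields $C\sqrt{A_2^{\alpha }}\sqrt{|Q|_{\sigma }|Q|_{\omega }}$ for this tail.

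The substance of the argument is the small-scale bound
\begin{equation*}
\Phi (t)\;\le \;C\left(\frac{t}{\ell (Q)}\right)^{\!\theta }\sqrt{A_2^{\alpha }}\,\ell (Q)^{n-\alpha }\sqrt{|Q|_{\sigma }|Q|_{\omega }}, \qquad 0<t\le t_0.
\end{equation*}
Let $Q(y,r)$ denote the cube of side $r$ centered at $y$. Since $B(y,t)\subset Q(y,2t)$ in the $\ell^{\infty }$ metric, $\Phi (t)\le \int_Q |Q(y,2t)\cap Q|_{\omega }\,d\sigma (y)$; Cauchy--Schwarz in $L^{2}(d\sigma )$ gives $\Phi (t)\le \sqrt{|Q|_{\sigma }}\,\bigl(\int_Q |Q(y,2t)\cap Q|_{\omega }^{2}\,d\sigma (y)\bigr)^{1/2}$. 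Expanding the square and swapping order of integration by Fubini yields
\begin{equation*}
\int_Q|Q(y,2t)\cap Q|_{\omega }^{2}\,d\sigma (y) = \int_Q\!\int_Q |Q(x_1,2t)\cap Q(x_2,2t)\cap Q|_{\sigma }\,d\omega (x_1)\,d\omega (x_2),
\end{equation*}
and dropping the $Q(x_2,2t)$ factor and using that the integrand vanishes unless $|x_1-x_2|_{\infty }\le 2t$, this is at most $\int_Q |Q(x_1,2t)\cap Q|_{\sigma }\,|Q(x_1,4t)\cap Q|_{\omega }\,d\omega (x_1)$.

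For each $x_1\in Q$ let $Q^{\ast }_{x_1}\equiv Q(x_1,2\ell (Q))$, so that $Q\subset Q^{\ast }_{x_1}$. Diagonal reverse doubling on $Q^{\ast }_{x_1}$ at scale $s=2t/\ell (Q)$ gives $|Q(x_1,4t)|_{\sigma }|Q(x_1,4t)|_{\omega }\le C(t/\ell (Q))^{2\theta }|Q^{\ast }_{x_1}|_{\sigma }|Q^{\ast }_{x_1}|_{\omega }$, and the Muckenhoupt condition applied to $Q^{\ast }_{x_1}$ gives $|Q^{\ast }_{x_1}|_{\sigma }|Q^{\ast }_{x_1}|_{\omega }\le CA_2^{\alpha }\ell (Q)^{2(n-\alpha )}$. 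The pointwise bound $C(t/\ell (Q))^{2\theta }A_2^{\alpha }\ell (Q)^{2(n-\alpha )}$ is uniform in $x_1$, so integrating against $d\omega (x_1)$ over $Q$ picks up a factor $|Q|_{\omega }$; taking the square root and multiplying by $\sqrt{|Q|_{\sigma }}$ from Cauchy--Schwarz delivers the claimed bound on $\Phi (t)$.

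Inserting this bound into the small-scale integral,
\begin{equation*}
\int_0^{t_0} t^{\alpha -n-1}\Phi (t)\,dt \;\le \;C\sqrt{A_2^{\alpha }}\sqrt{|Q|_{\sigma }|Q|_{\omega }}\,\ell (Q)^{n-\alpha -\theta }\int_0^{t_0} t^{\theta +\alpha -n-1}\,dt,
\end{equation*}
and integrability at $t=0$ is exactly the hypothesis $\theta +\alpha >n$; the $t$-integral equals $t_0^{\theta +\alpha -n}/(\theta +\alpha -n)$, the powers of $\ell (Q)$ cancel, and we obtain $C_{\theta ,\alpha ,n}\sqrt{A_2^{\alpha }}\sqrt{|Q|_{\sigma }|Q|_{\omega }}$ as required. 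The main obstacle is to avoid the measures of enlargements of $Q$ such as $|3Q|_{\omega }$, which are uncontrolled without a doubling hypothesis; this is resolved by restricting every auxiliary cube to $\cap Q$ in the Cauchy--Schwarz/Fubini step, and by applying diagonal reverse doubling to cubes of side $2\ell (Q)$ centered at the \emph{variable} point $x_1\in Q$ rather than at a fixed center, so that each such auxiliary cube contains $Q$ and can be directly estimated via $A_2^{\alpha }$ in a single step.
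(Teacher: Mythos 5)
Your proof is correct, and the core mechanism is the same as the paper's: decompose by scale, use diagonal reverse doubling near the diagonal of $Q\times Q$ to gain a factor $(t/\ell(Q))^{\theta}$ at scale $t$, sum/integrate the resulting geometric decay using $\theta+\alpha>n$, and invoke $A_{2}^{\alpha}$ only at the top scale. The packaging, however, is genuinely different. The paper works dyadically: it dominates $|x-y|^{\alpha-n}$ by $\sum_{k}\sum_{I}\ell(I)^{\alpha-n}\mathbf{1}_{3I\times 3I}$ over dyadic $I\subset Q$ with $\ell(I)=2^{-k}\ell(Q)$, applies diagonal reverse doubling to each $3I$ relative to $9Q$, and then uses Cauchy--Schwarz on the sum over the dyadic family at each fixed level $k$ (with bounded overlap of the $3I$) to produce $\sqrt{|Q|_{\sigma}|Q|_{\omega}}$. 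You instead use a continuous layer-cake in $|x-y|$, apply Cauchy--Schwarz in $L^{2}(d\sigma)$ to $\Phi(t)$, and expand the square by Fubini, which converts the problem into estimating $|Q(x_1,2t)\cap Q|_{\sigma}\,|Q(x_1,4t)\cap Q|_{\omega}$ pointwise via reverse doubling and $A_2^{\alpha}$ applied to cubes of side $2\ell(Q)$ centered at the variable point $x_1$. Both routes are equally elementary; yours avoids the enlarged cube $9Q$ entirely, though this is a cosmetic rather than substantive gain, since the paper controls $\ell(9Q)^{\alpha-n}\sqrt{|9Q|_{\sigma}|9Q|_{\omega}}$ directly by $\sqrt{A_2^{\alpha}}$ (the supremum in $A_2^{\alpha}$ runs over all cubes, so no doubling is needed there either) --- so the ``main obstacle'' you describe at the end is not actually an obstacle for the paper's argument. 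One small point common to both proofs: the reverse doubling inequality is only guaranteed for sufficiently small $s$, so for $t$ comparable to $\ell(Q)$ (i.e.\ $s=2t/\ell(Q)$ bounded below) you should note that the bound on $\Phi(t)$ holds trivially from $A_2^{\alpha}$ alone, with the constant absorbing the bounded factor $(t/\ell(Q))^{-2\theta}$; this mirrors the paper's ``crude estimate for $k$ small.''
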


Using Ahlfors-David regular measures, we show that the inequality $n<\theta
+\alpha $ in Theorem \ref{alpha} is sharp. As a corollary of Theorems \ref%
{BICT theorem}\ and \ref{alpha}, we obtain a $T1$ theorem for arbitrary $%
\alpha $-fractional Calder\'{o}n-Zygmund operators in this setting. Note
that Theorem \ref{BICT theorem} requires a degree of smoothness for the
kernel that is related to the \emph{doubling} exponents, as opposed to the 
\emph{reverse doubling} exponents.

\begin{corollary}
Suppose that $\sigma $ and $\omega $ are locally finite positive doubling
Borel measures on $\mathbb{R}^{n}$, with a diagonal reverse doubling
exponent $\theta _{\sigma \times \omega }^{\limfunc{diag}}$ and set $\theta =%
\frac{\theta _{\sigma \times \omega }^{\limfunc{diag}}}{2}$. Suppose $%
0<\alpha <n<\theta +\alpha $ and that $T^{\alpha }$ is a $\left( \kappa
_{1}+\delta ,\kappa _{2}+\delta \right) $-smooth standard $\alpha $%
-fractional Calder\'{o}n-Zygmund singular integral in $\mathbb{R}^{n}$ with $%
\kappa _{1}>\theta _{\sigma }^{\limfunc{doub}}$ and $\kappa _{2}>\theta
_{\omega }^{\limfunc{doub}}$. Then%
\begin{equation*}
\mathfrak{N}_{T^{\alpha }}\left( \sigma ,\omega \right) \lesssim \mathfrak{T}%
_{T^{\alpha }}\left( \sigma ,\omega \right) +\mathfrak{T}_{T^{\alpha ,\ast
}}\omega \left( ,\sigma \right) +\mathcal{A}_{2}^{\alpha }\left( \sigma
,\omega \right) +\mathcal{A}_{2}^{\alpha }\left( \omega ,\sigma \right) ,
\end{equation*}%
where the implied constant depends on $\alpha $, $n$, and the doubling and
reverse doubling exponents for $\sigma $ and $\omega $, and moreover, in
terms of cancellation conditions on the kernel $K^{\alpha }\left( x,y\right) 
$ of $T^{\alpha }$, we have%
\begin{equation*}
\mathfrak{N}_{T^{\alpha }}\left( \sigma ,\omega \right) \lesssim \mathfrak{A}%
_{K^{\alpha }}\left( \sigma ,\omega \right) +\mathfrak{A}_{K^{\alpha ,\ast
}}\left( \omega ,\sigma \right) +\mathcal{A}_{2}^{\alpha }\left( \sigma
,\omega \right) +\mathcal{A}_{2}^{\alpha }\left( \omega ,\sigma \right) ,
\end{equation*}%
where $\mathfrak{A}_{K^{\alpha }}\left( \sigma ,\omega \right) $ and $%
\mathfrak{A}_{K^{\alpha ,\ast }}\left( \omega ,\sigma \right) $ denote the
least positive constants in (\ref{can cond}).\newline
\end{corollary}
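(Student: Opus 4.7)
The plan is to deduce the corollary directly from Theorem \ref{BICT theorem} by using Theorem \ref{alpha} to absorb the $\mathcal{BICT}_{T^{\alpha }}$ term into the right-hand side. Since the smoothness hypotheses $\kappa _{1}>\theta _{\sigma }^{\limfunc{doub}}$ and $\kappa _{2}>\theta _{\omega }^{\limfunc{doub}}$ match precisely the ones in Theorem \ref{BICT theorem}, I would invoke its first conclusion to obtain
\begin{equation*}
\mathfrak{N}_{T^{\alpha }}\left( \sigma ,\omega \right) \lesssim \mathfrak{T}_{T^{\alpha }}\left( \sigma ,\omega \right) +\mathfrak{T}_{T^{\alpha ,\ast }}\left( \omega ,\sigma \right) +\mathcal{A}_{2}^{\alpha }\left( \sigma ,\omega \right) +\mathcal{A}_{2}^{\alpha }\left( \omega ,\sigma \right) +\mathcal{BICT}_{T^{\alpha }}\left( \sigma ,\omega \right),
\end{equation*}
reducing the problem to controlling $\mathcal{BICT}_{T^{\alpha }}\left( \sigma ,\omega \right) $ by the first four terms.

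The absorption step rests on the pointwise domination $\left\vert T^{\alpha }\left( g\sigma \right) \left( x\right) \right\vert \leq C_{CZ}\,I^{\alpha }\left( \left\vert g\right\vert \sigma \right) \left( x\right) $, which is an immediate consequence of the size bound $\left\vert K^{\alpha }\left( x,y\right) \right\vert \leq C_{CZ}\left\vert x-y\right\vert ^{\alpha -n}$ from (\ref{sizeandsmoothness'}); this is the one place where the hypothesis $\alpha >0$ is essential. Applying it with $g=\mathbf{1}_{E}$ and then integrating the absolute value over any compact $F\subset Q$ gives
\begin{equation*}
\left\vert \int_{F}T_{\sigma }^{\alpha }\left( \mathbf{1}_{E}\right) \,d\omega \right\vert \leq C_{CZ}\int_{Q}I^{\alpha }\left( \mathbf{1}_{Q}\sigma \right) \,d\omega ,
\end{equation*}
and the right-hand side is bounded by $C\sqrt{A_{2}^{\alpha }\left( \sigma ,\omega \right) }\sqrt{\left\vert Q\right\vert _{\sigma }\left\vert Q\right\vert _{\omega }}$ by Theorem \ref{alpha}, whose hypothesis $n<\theta +\alpha $ is exactly what is assumed here. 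Dividing by $\sqrt{\left\vert Q\right\vert _{\sigma }\left\vert Q\right\vert _{\omega }}$ and taking suprema over $Q,E,F$ yields $\mathcal{BICT}_{T^{\alpha }}\left( \sigma ,\omega \right) \lesssim \sqrt{A_{2}^{\alpha }\left( \sigma ,\omega \right) }$. Restricting the reproducing Poisson integral to $Q$ gives $\mathcal{P}^{\alpha }\left( Q,\sigma \right) \gtrsim \left\vert Q\right\vert _{\sigma }/\left\vert Q\right\vert ^{1-\alpha /n}$, hence $A_{2}^{\alpha }\left( \sigma ,\omega \right) \lesssim \mathcal{A}_{2}^{\alpha }\left( \sigma ,\omega \right)$, so the $\mathcal{BICT}$ term is absorbed into the Muckenhoupt terms already present, establishing the first inequality of the corollary.

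For the cancellation conditions version, I would apply exactly the same absorption to the cancellation form of Theorem \ref{BICT theorem}: the cancellation constants $\mathfrak{A}_{K^{\alpha }}, \mathfrak{A}_{K^{\alpha ,\ast }}$ dominate the testing constants $\mathfrak{T}_{T^{\alpha }}, \mathfrak{T}_{T^{\alpha ,\ast }}$ up to a Muckenhoupt term by the standard annular truncation argument built into the $\mathfrak{A}$-form, so the argument above for $\mathcal{BICT}_{T^{\alpha }}$ transfers verbatim. Since both analytic ingredients are already established, the only steps needing verification are the kernel domination (trivial from (\ref{sizeandsmoothness'})) and the elementary inequality $A_{2}^{\alpha }\lesssim \mathcal{A}_{2}^{\alpha }$; there is no substantive obstacle, the corollary being essentially a clean packaging of Theorems \ref{BICT theorem} and \ref{alpha}.
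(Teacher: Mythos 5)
Your proposal is correct and follows exactly the route the paper intends: the paper gives no separate proof of the corollary but derives it by combining Theorem \ref{BICT theorem} with the pointwise domination $\left\vert T^{\alpha }\left( \mathbf{1}_{E}\sigma \right) \right\vert \leq CI^{\alpha }\left( \mathbf{1}_{E}\sigma \right) $ spelled out at the start of Section 2, Theorem \ref{alpha}, and the trivial bound $A_{2}^{\alpha }\lesssim \mathcal{A}_{2}^{\alpha }$, just as you do. The only loosely argued point, the transfer to the cancellation-condition form, is treated with no more detail in the paper itself, which simply defers to \texttt{arXiv:1906.05602}, \texttt{1907.07571} and \texttt{1907.10734}.
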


In the third section, we will adapt Nazarov's construction from \cite{NaVo}
to give a negative answer to the analogous question for the dyadic Hilbert
transform $H^{\limfunc{dy}}$ (a particular martingale transform) in Theorem %
\ref{alpha}, namely that $H^{\limfunc{dy}}$, which is of course bounded on
unweighted $L^{2}\left( \mathbb{R}\right) $, can fail the inequality%
\begin{equation*}
\left\vert \int_{Q}H^{\limfunc{dy}}\left( \mathbf{1}_{Q}\sigma \right)
d\omega \right\vert \leq C\sqrt{\left\vert Q\right\vert _{\sigma }\left\vert
Q\right\vert _{\omega }},\ \ \ \ \ \text{for all intervals }Q\text{.}
\end{equation*}%
for all positive constants $C$, no matter the doubling constants of $\sigma $
and $\omega $. Let $\mathcal{D}^{0}$ denote the set of dyadic intervals
contained in the unit interval $\left[ 0,1\right] $, and let $H^{\limfunc{dy}%
}$ denote the dyadic Hilbert transform%
\begin{equation}
H^{\limfunc{dy}}\mu \left( x\right) \equiv \frac{1}{2}\sum_{I\in \mathcal{D}%
^{0}:\ x\in I}\bigtriangleup _{I}\mu ,\ \ \ \ \ \bigtriangleup _{I}\mu
\equiv \left( E_{I_{-}}\mu -E_{I_{+}}\mu \right) ,\ \ \ \ \ E_{I}\mu \equiv 
\frac{1}{\left\vert I\right\vert }\int_{I}d\mu ,  \label{def H dyadic}
\end{equation}%
where $I_{-}$ and $I_{+}$ are the left and right hand dyadic children of $I$%
. Note that $H^{\limfunc{dy}}\mu \left( x\right) =\sum_{I\in \mathcal{D}%
^{0}}\left\langle \mu ,h_{I}\right\rangle \frac{1}{\sqrt{\left\vert
I\right\vert }}\mathbf{1}_{I}$ where $\left\{ h_{I}\right\} _{I\in \mathcal{D%
}^{0}}$ is the Haar basis of $L_{0}^{2}\left( \left[ 0,1\right] \right)
\equiv \left\{ f\in L^{2}\left( 0,1\right) :\int_{0}^{1}f=0\right\} $, and
where of course $\mu \left( x\right) =\sum_{I\in \mathcal{D}%
^{0}}\left\langle \mu ,h_{I}\right\rangle h_{I}$ for $\mu \in
L_{0}^{2}\left( 0,1\right) $.

\begin{theorem}[adaptation of \protect\cite{NaVo}]
\label{beta}For every $\Gamma >1$ and $\tau >0$ sufficiently small, there
exist positive weights $u$ and $v$ on the unit interval $\left[ 0,1\right] $
satisfying%
\begin{eqnarray*}
&&\int_{0}^{1}H^{\limfunc{dy}}v\left( x\right) u\left( x\right) dx\geq
\Gamma \sqrt{\left( \int_{0}^{1}u\left( x\right) dx\right) \left(
\int_{0}^{1}v\left( x\right) dx\right) }, \\
&&\left( \frac{1}{\left\vert I\right\vert }\int_{I}u\left( x\right)
dx\right) \left( \frac{1}{\left\vert I\right\vert }\int_{I}v\left( x\right)
dx\right) \leq 1,\ \ \ \ \ \text{for all }I\in \mathcal{D}^{0}, \\
&&1-\tau <\frac{E_{I_{-}}u}{E_{I_{+}}u},\frac{E_{I_{-}}v}{E_{I_{+}}v}<1+\tau
,\ \ \ \ \ \text{for all }I\in \mathcal{D}^{0}.
\end{eqnarray*}
\end{theorem}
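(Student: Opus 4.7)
The plan is to construct $u$ and $v$ as multiplicative Haar martingales on the dyadic tree of $[0,1]$, with perturbations of a fixed size $\rho\in(0,1)$ (to be taken $\rho\approx\tau/3$) applied with \emph{opposite} signs between $u$ and $v$ at every scale, and with a deterministic, spatially uniform sign pattern across the tree. Starting from $u^{(0)}=v^{(0)}=1$ on $[0,1]$, for each $k=1,\ldots,N$ and each $I\in\mathcal{D}^{0}$ of length $2^{-(k-1)}$, I will set
\begin{equation*}
u^{(k)}|_{I_{-}}=(1-\rho)u^{(k-1)}|_{I},\qquad u^{(k)}|_{I_{+}}=(1+\rho)u^{(k-1)}|_{I},
\end{equation*}
and the mirror pattern $(1+\rho),(1-\rho)$ for $v^{(k)}$, and finally take $u=u^{(N)}$ and $v=v^{(N)}$.

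The first two conditions are then essentially bookkeeping. Each refinement averages $(1+\rho)$ and $(1-\rho)$ over the two children of $I$, so $\int_{0}^{1}u=\int_{0}^{1}v=1$. The product of the two perturbation factors on either child is $(1\pm\rho)(1\mp\rho)=1-\rho^{2}$, so by induction the constant values $u_{I},v_{I}$ of $u,v$ on a dyadic $I$ of length $2^{-k}$ satisfy $u_{I}v_{I}=(1-\rho^{2})^{k}$; since further refinements only introduce mean-zero multiplicative perturbations below $I$, one has $E_{I}u=u_{I}$ and $E_{I}v=v_{I}$, giving the $A_{2}$ bound $(E_{I}u)(E_{I}v)\leq 1$. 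The dyadic doubling ratios are $E_{I_{-}}u/E_{I_{+}}u=(1-\rho)/(1+\rho)$ and its reciprocal for $v$, each of which lies in $(1-2\rho,1+2\rho)$, so choosing $\rho\leq\tau/2$ (e.g., $\rho=\tau/3$) secures the third condition.

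The heart of the matter is the bilinear form. Using the Haar expansion recalled in the excerpt,
\begin{equation*}
\int_{0}^{1}H^{\limfunc{dy}}v\cdot u\,dx=\sum_{I\in\mathcal{D}^{0}}\langle v,h_{I}\rangle\sqrt{|I|}\,E_{I}u.
\end{equation*}
The uniform sign choice gives $\langle v,h_{I}\rangle=\tfrac{\sqrt{|I|}}{2}(E_{I_{-}}v-E_{I_{+}}v)=\rho\,v_{I}\sqrt{|I|}>0$ for every $I$ of length strictly greater than $2^{-N}$ (and zero below), while $E_{I}u=u_{I}>0$. Each summand is therefore $\rho|I|u_{I}v_{I}=\rho|I|(1-\rho^{2})^{k}$ on intervals of length $2^{-k}$, and since these lengths sum to $1$ at every fixed scale, the series telescopes to
\begin{equation*}
\int_{0}^{1}H^{\limfunc{dy}}v\cdot u\,dx=\rho\sum_{k=0}^{N-1}(1-\rho^{2})^{k}=\frac{1-(1-\rho^{2})^{N}}{\rho}.
\end{equation*}
Taking $\rho=\tau/3$ and $N$ large enough (of order $\Gamma/\tau$) that $(1-\rho^{2})^{N}\leq 1-\Gamma\rho$ makes this exceed $\Gamma=\Gamma\sqrt{(\int u)(\int v)}$; this is possible precisely when $\tau<3/\Gamma$, which is exactly what ``$\tau$ sufficiently small'' is for.

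The main obstacle, and really the only structural idea, is this uniform deterministic choice of perturbation signs: it is what forces every Haar coefficient $\langle v,h_{I}\rangle$ to have a definite positive sign and to correlate constructively with the positive average $E_{I}u$, so that the bilinear form telescopes rather than cancelling across the dyadic tree. Once this rigid sign structure is in place, everything else reduces to a finite geometric series.
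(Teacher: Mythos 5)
Your construction is internally correct: the multiplicative, anti\-/correlated perturbation scheme does produce positive weights with $(E_{I}u)(E_{I}v)=(1-\rho ^{2})^{\min (k,N)}\leq 1$, dyadic child ratios $(1\pm \rho )/(1\mp \rho )$, and bilinear form exactly $\frac{1-(1-\rho ^{2})^{N}}{\rho }$; all the arithmetic checks out against the normalization of $H^{\limfunc{dy}}$ in (\ref{def H dyadic}). The issue is quantitative, and it is the whole point of the theorem. Your bilinear form is capped by $1/\rho \approx 3/\tau $ no matter how large $N$ is, because the anti\-/correlation forces the product $(E_{I}u)(E_{I}v)$ to decay geometrically like $(1-\rho ^{2})^{k}$, which extinguishes the sum after about $\rho ^{-2}$ generations. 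Consequently you must take $\tau \lesssim 1/\Gamma $, i.e.\ the admissible range of $\tau $ shrinks as $\Gamma $ grows. The paper proves the opposite quantifier order: Lemma \ref{dyadic restricted} fixes $\tau \in (0,1)$ \emph{first} and then produces, for \emph{every} $\Gamma >1$, a weight pair with that fixed doubling parameter whose bilinear form exceeds $\Gamma $. This is done non\-/constructively, by showing that the Bellman function $\mathcal{B}_{\tau }(x)$ of (\ref{linear Bell}) would otherwise be a finite concave function satisfying the first\-/order improvement $\frac{\mathcal{B}_{\tau }(x+y)+\mathcal{B}_{\tau }(x-y)}{2}+2\left\vert y_{2}\right\vert x_{1}\leq \mathcal{B}_{\tau }(x)$ of Theorem \ref{quant bil tau}, which contradicts a.e.\ second\-/order differentiability of concave functions. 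The linear\-/in\-/$\left\vert y\right\vert $ gain is exactly what certifies that some cleverer (non\-/uniform) arrangement of the perturbation signs and sizes evades the $1/\tau $ barrier that your rigid sign pattern runs into; no explicit construction is exhibited.

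Whether your argument ``proves the statement'' therefore hinges on reading ``$\tau >0$ sufficiently small'' as allowing the threshold to depend on $\Gamma $. For the qualitative application -- showing $\mathcal{BICT}_{H^{\limfunc{dy}}}$ is not controlled by $\mathcal{A}_{2}$ plus doubling -- your weaker version does suffice, since your family of weight pairs has uniformly bounded $A_{2}$ and doubling constants (indeed improving to those of Lebesgue measure as $\Gamma \rightarrow \infty $) while the bilinear form is unbounded. But it does not recover the paper's actual result, and you should be aware that no choice of $N$ or $\rho $ in your scheme can: to get arbitrary $\Gamma $ at fixed $\tau $ you need the perturbations of $u$ and $v$ to correlate constructively in the bilinear sum \emph{without} driving the product of averages to zero geometrically, and that tension is precisely what the Bellman machinery is built to resolve.
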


From the second line we obtain the two-tailed Muckenhoupt condition $%
\mathcal{A}_{2}\left( u,v\right) \leq C$ for $\tau >0$ sufficiently small,
independent of $\Gamma $, and from the third line, we obtain the doubling
conditions for $u$ and $v$ with doubling constants arbitrarily close to $2$
for $\tau >0$ sufficiently small, independent of $\Gamma $. See \cite{NaVo}
for the routine proofs of these latter assertions.

Finally, in the appendix we discuss one of the main reasons for restricting
our attention to pairs of doubling weights here, and complete the optimal
range for a certain parameter in a characterization of doubling in \cite%
{Saw2}.

\section{Bilinear cube testing for $\protect\alpha >0$}

For $\alpha >0$ we use the domination $T^{\alpha }f\leq CI^{\alpha
}\left\vert f\right\vert $ to obtain%
\begin{equation*}
\left\vert \int_{F}T^{\alpha }\left( \mathbf{1}_{E}\sigma \right) d\omega
\right\vert \leq C\int_{F}I^{\alpha }\left( \mathbf{1}_{E}\sigma \right)
d\omega \leq C\int_{Q}I^{\alpha }\left( \mathbf{1}_{Q}\sigma \right) d\omega
,\ \ \ \ \ \ E,F\subset Q.
\end{equation*}%
Let $\mathfrak{BCT}_{I^{\alpha }}\left( \sigma ,\omega \right) $ denote the
best constant in the Bilinear Cube Testing inequality for the fractional
integral $I^{\alpha }$,%
\begin{equation}
\int_{Q}I^{\alpha }\left( \mathbf{1}_{Q}\sigma \right) d\omega \leq 
\mathfrak{BCT}_{I^{\alpha }}\left( \sigma ,\omega \right) \sqrt{\left\vert
Q\right\vert _{\sigma }\left\vert Q\right\vert _{\omega }},\ \ \ \ \ \text{%
for all cubes }Q\in \mathcal{P}^{n}.  \label{bilinear cube testing}
\end{equation}%
The constant $\mathfrak{BCT}_{I^{\alpha }}\left( \sigma ,\omega \right) $ is
at most the restricted weak type norm constant $\mathfrak{RWT}_{I^{\alpha
}}\left( \sigma ,\omega \right) $ of $I^{\alpha }:L^{2,1}\left( \sigma
\right) \rightarrow L^{2,\infty }\left( \omega \right) $ (which by duality
is the same for the inequality $I^{\alpha }:L^{2,1}\left( \omega \right)
\rightarrow L^{2,\infty }\left( \sigma \right) $), but a characterization of
the restricted weak type constant $\mathfrak{RWT}_{I^{\alpha }}\left( \sigma
,\omega \right) $ has yet to be found. Indeed, the restricted weak type
constant $\mathfrak{RWT}_{I^{\alpha }}\left( \sigma ,\omega \right) $ for $%
I^{\alpha }$ is the smallest constant satisfying%
\begin{equation*}
\int I^{\alpha }\left( f\sigma \right) gd\omega \leq \mathfrak{RWT}%
_{I^{\alpha }}\left( \sigma ,\omega \right) \left\Vert f\right\Vert
_{L^{2,1}\left( \sigma \right) }\left\Vert g\right\Vert _{L^{2,\infty
}\left( \omega \right) },\ \ \ \ \ \text{for all }f\in L^{2,1}\left( \sigma
\right) ,g\in L^{2,\infty }\left( \omega \right) ,
\end{equation*}%
which is in turn equivalent to 
\begin{equation*}
\int_{F}I^{\alpha }\left( \mathbf{1}_{E}\sigma \right) d\omega \leq 
\mathfrak{RWT}_{I^{\alpha }}\left( \sigma ,\omega \right) \sqrt{\left\vert
E\right\vert _{\sigma }\left\vert F\right\vert _{\omega }},\ \ \ \ \ \text{%
for all compact subsets }E,F\subset \mathbb{R}^{n},
\end{equation*}%
by results in Stein and Weiss \cite{StWe2}. Then setting $E=F=Q$ yields (\ref%
{bilinear cube testing}) with $\mathfrak{BCT}_{I^{\alpha }}\left( \sigma
,\omega \right) \leq \mathfrak{RWT}_{I^{\alpha }}\left( \sigma ,\omega
\right) $.

Unfortunately, there is no known \emph{simple}\footnote{%
By \emph{simple} characterization, we mean using conditions of Muckenhoupt
type.} characterization of the harmless looking testing inequality (\ref%
{bilinear cube testing}), and in fact the only known $\emph{simple}$
sufficient condition for (\ref{bilinear cube testing}) to hold is that $%
A_{2}^{\alpha }\left( \sigma ,\omega \right) <\infty $ and one of the
measures is an $A_{\infty }$ weight, see \cite{Saw3}. Since we are assuming $%
A_{2}^{\alpha }\left( \sigma ,\omega \right) <\infty $ in all of our work
above anyways, and since $A_{2}^{\alpha }\left( \sigma ,\omega \right)
<\infty $ is necessary for (\ref{bilinear cube testing}) to hold, we now
consider the problem of characterizing those weight pairs for which $%
\mathfrak{BCT}_{I^{\alpha }}\left( \sigma ,\omega \right) $ is controlled by 
$A_{2}^{\alpha }\left( \sigma ,\omega \right) $, i.e. there is a positive
constant $C$ satisfying%
\begin{equation}
\int_{Q}I^{\alpha }\left( \mathbf{1}_{Q}\sigma \right) d\omega \leq C\sqrt{%
A_{2}^{\alpha }\left( \sigma ,\omega \right) }\sqrt{\left\vert Q\right\vert
_{\sigma }\left\vert Q\right\vert _{\omega }},\ \ \ \ \ \text{for all cubes }%
Q\in \mathcal{P}^{n}.  \label{refinement}
\end{equation}

Again, there does not appear to be a \emph{simple} characterization of (\ref%
{refinement}) either, with the only sufficient condition being that
mentioned above, namely that one of the measures is an $A_{\infty }$ weight.
Theorem \ref{alpha} above provides a different sufficient condition that
involves a diagonal reverse doubling exponent of the product measure $\sigma
\times \omega $.

\subsection{Proof of the diagonal reverse doubling Theorem \protect\ref%
{alpha}}

\begin{proof}
We estimate the left hand side of (\ref{refinement}) by%
\begin{eqnarray*}
\int_{Q}I^{\alpha }\left( \mathbf{1}_{Q}\sigma \right) d\omega
&=&\diint\limits_{Q\times Q}\left\vert x-y\right\vert ^{\alpha -n}d\sigma
\left( x\right) d\omega \left( y\right) \\
&\leq &C_{\alpha ,n}\diint\limits_{Q\times Q}\left\{ \sum_{k=0}^{\infty
}\sum_{I\in \mathcal{D}:\ \ell \left( I\right) =2^{-k}\ell \left( Q\right)
}\ell \left( I\right) ^{\alpha -n}\mathbf{1}_{3I\times 3I}\left( x,y\right)
\right\} d\sigma \left( x\right) d\omega \left( y\right) \\
&=&C_{\alpha ,n}\sum_{k=0}^{\infty }\sum_{\substack{ I\in \mathcal{D}  \\ %
\ell \left( I\right) =2^{-k}\ell \left( Q\right) ,\ I\subset Q}}\left[
2^{-k}\ell \left( Q\right) \right] ^{\alpha -n}\left\vert \left( 3I\times
3I\right) \cap \left( Q\times Q\right) \right\vert _{\sigma \times \omega },
\end{eqnarray*}%
and then using that the diagonal reverse doubling exponent $2\theta $ for $%
\sigma \times \omega $ satisfies $\theta >n-\alpha $, we obtain that for $%
I\subset Q$ with $\ell \left( I\right) =2^{-k}\ell \left( Q\right) $ and $k$
large, 
\begin{equation*}
\sqrt{\left\vert 3I\times 3I\right\vert _{\sigma \times \omega }}=\sqrt{%
\left\vert 2^{-k}\left( 2^{k}3I\times 2^{k}3I\right) \right\vert _{\sigma
\times \omega }}\leq 2^{-k\theta }\sqrt{\left\vert 2^{k}3I\times
2^{k}3I\right\vert _{\sigma \times \omega }}\leq 2^{-k\theta }\sqrt{%
\left\vert 9Q\times 9Q\right\vert _{\sigma \times \omega }}.
\end{equation*}%
Using this estimate for $k$ large, and the crude estimate $\sqrt{\left\vert
3I\times 3I\right\vert _{\sigma \times \omega }}\leq \sqrt{\left\vert
9Q\times 9Q\right\vert _{\sigma \times \omega }}$ for $k$ small, we obtain%
\begin{eqnarray*}
&&\int_{Q}I^{\alpha }\left( \mathbf{1}_{Q}\sigma \right) d\omega \\
&\leq &C_{\alpha ,n}\ell \left( Q\right) ^{\alpha -n}\sqrt{\left\vert
9Q\times 9Q\right\vert _{\sigma \times \omega }}\sum_{k=0}^{\infty
}2^{-k\left( \alpha -n\right) }2^{-k\theta }\sum_{\substack{ I\in \mathcal{D}
\\ \ell \left( I\right) =2^{-k}\ell \left( Q\right) ,\ I\subset Q}}\sqrt{%
\left\vert \left( 3I\times 3I\right) \cap \left( Q\times Q\right)
\right\vert _{\sigma \times \omega }} \\
&\leq &C_{\alpha ,n}\ell \left( Q\right) ^{\alpha -n}\sqrt{\left\vert
9Q\right\vert _{\sigma }\left\vert 9Q\right\vert _{\omega }}%
\sum_{k=0}^{\infty }2^{-k\left( \theta +\alpha -n\right) }\left( \sum 
_{\substack{ I\in \mathcal{D}  \\ \ell \left( I\right) =2^{-k}\ell \left(
Q\right) ,\ I\subset Q}}\left\vert 3I\cap Q\right\vert _{\sigma }\right) ^{%
\frac{1}{2}}\left( \sum_{\substack{ I\in \mathcal{D}  \\ \ell \left(
I\right) =2^{-k}\ell \left( Q\right) ,\ I\subset Q}}\left\vert 3I\cap
Q\right\vert _{\omega }\right) ^{\frac{1}{2}} \\
&\leq &C_{\theta ,\alpha ,n}\ell \left( 9Q\right) ^{\alpha -n}\sqrt{%
\left\vert 9Q\right\vert _{\sigma }\left\vert 9Q\right\vert _{\omega }}\sqrt{%
\left\vert Q\right\vert _{\sigma }\left\vert Q\right\vert _{\omega }}\leq
C_{\theta ,\alpha ,n}\sqrt{A_{2}^{\alpha }}\sqrt{\left\vert Q\right\vert
_{\sigma }\left\vert Q\right\vert _{\omega }}.
\end{eqnarray*}
\end{proof}

\subsection{Sharpness of the diagonal reverse doubling exponent}

Our sharpness examples will be for the equal weight case $\mu =\sigma
=\omega $. We now reformulate the equal weight case of inequality (\ref%
{refinement}) using the semigroup property $I^{\alpha }=I^{\frac{\alpha }{2}%
}\circ I^{\frac{\alpha }{2}}$ and $\beta =\frac{\alpha }{2}$. First, by a
result of Muckenhoupt and Wheeden \cite{MuWh}, we have%
\begin{equation*}
\int_{Q}I^{\alpha }\left( \mathbf{1}_{Q}\mu \right) d\mu =\int_{Q}I^{\frac{%
\alpha }{2}}\circ I^{\frac{\alpha }{2}}\left( \mathbf{1}_{Q}\mu \right) d\mu
=\int_{\mathbb{R}^{n}}I^{\beta }\left( \mathbf{1}_{Q}\mu \right) \left(
x\right) ^{2}dx\approx \int_{\mathbb{R}^{n}}M^{\beta }\left( \mathbf{1}%
_{Q}\mu \right) \left( x\right) ^{2}dx,
\end{equation*}%
where $M^{\beta }\nu \left( x\right) \equiv \sup_{x\in Q}\left\vert
Q\right\vert ^{\frac{\beta }{n}-1}\int_{Q}d\nu $ is the fractional maximal
function. Thus in the equal weight case $\mu =\sigma =\omega $, (\ref%
{refinement}) is equivalent to%
\begin{equation}
\int_{\mathbb{R}^{n}}M^{\beta }\left( \mathbf{1}_{Q}\mu \right) \left(
x\right) ^{2}dx\leq C\sqrt{A_{2}^{\alpha }\left( \mu ,\mu \right) }%
\left\vert Q\right\vert _{\mu },\ \ \ \ \ \text{for all cubes }Q\in \mathcal{%
P}^{n}.  \label{refinement equal}
\end{equation}

\begin{example}
\label{ex fail}In the case $\mu =\sigma =\omega =dx_{1}$ is the singular
measure in the plane $\mathbb{R}^{2}$ given by one-dimensional Lebesgue
measure on the real axis, and with $\alpha =1=\frac{n}{2}$, we have that the
reverse doubling exponent of $\mu \times \mu $ is $2$, and that the
fractional Muckenhoupt constant is finite, yet $\int_{Q}I^{\alpha }\left( 
\mathbf{1}_{Q}\sigma \right) d\omega =\infty $, showing that (\ref%
{refinement equal}) can fail when $\theta =n-\alpha $. Indeed, it is trivial
that $\theta =1=n-\alpha $. For $Q=\left[ 0,R\right] \times \left[ 0,R\right]
$ and $\beta =\frac{\alpha }{2}=\frac{1}{2}$, we have%
\begin{equation*}
M^{\beta }\left( \mathbf{1}_{Q}\mu \right) \left( x_{1},x_{2}\right) \approx
x_{2}^{2\left( \frac{\beta }{2}-1\right) }x_{2}=x_{2}^{\beta -1},\ \ \ \ \
x=\left( x_{1},x_{2}\right) \in Q,
\end{equation*}%
and so%
\begin{equation*}
\frac{1}{\left\vert Q\right\vert _{\mu }}\int_{Q}M^{\beta }\left( \mathbf{1}%
_{Q}\mu \right) \left( x_{1},x_{2}\right) ^{2}dx_{1}dx_{2}\approx \frac{1}{R}%
\int_{0}^{R}\int_{0}^{R}\left( x_{2}^{\beta -1}\right)
^{2}dx_{1}dx_{2}=\int_{0}^{R}x_{2}^{2\beta
-2}dx_{2}=\int_{0}^{R}x_{2}^{-1}dx_{2}=\infty ,
\end{equation*}%
while%
\begin{equation*}
\sqrt{A_{2}^{\alpha }\left( \sigma ,\omega \right) }\approx \sup_{Q=\left[
0,R\right] \times \left[ 0,R\right] }\left\vert Q\right\vert ^{\frac{\alpha 
}{n}-1}\cdot \int_{Q}d\mu =\sup_{R>0}\left( R^{2}\right) ^{\frac{1}{2}%
-1}\cdot R=1.
\end{equation*}
\end{example}

We can extend this sharpness example to general indices $0<\alpha <n$ using
Ahlfors-David regular measures. A measure $\mu $ is said to be Ahlfors-David
regular of order $\theta $ if%
\begin{equation}
\left\vert 3Q\right\vert _{\mu }\approx \ell \left( Q\right) ^{\theta }\text{
whenever }\left\vert Q\right\vert _{\mu }>0.  \label{def AD}
\end{equation}

\begin{lemma}
\label{AD lemma}If $\mu $ is any Ahlfors-David regular measure in $\mathbb{R}%
^{n}$ of order $n-\alpha $ where $0<\alpha <n$, then (\ref{refinement equal}%
) fails with $\beta =\frac{\alpha }{2}$.
\end{lemma}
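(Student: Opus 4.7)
The plan is to refute (\ref{refinement equal}) for a single cube $Q_0$ with $|Q_0|_\mu > 0$, by showing the left side is $+\infty$ while the right side remains finite. The right side is easy to control: (\ref{def AD}) applied to any cube $Q$ gives $|Q|_\mu \le |3Q|_\mu \lesssim \ell(Q)^{n-\alpha}$ (trivially so when $|Q|_\mu = 0$), hence $|Q|_\mu^2/\ell(Q)^{2(n-\alpha)} \lesssim 1$ uniformly, so $A_2^\alpha(\mu,\mu) \lesssim 1$ and the right side of (\ref{refinement equal}) is $\lesssim |Q_0|_\mu < \infty$.

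The heart of the argument is a pointwise lower bound
\begin{equation*}
M^\beta(\mathbf{1}_{Q_0}\mu)(x) \gtrsim d(x)^{-\alpha/2}, \qquad d(x) := \operatorname{dist}\bigl(x,\operatorname{supp}\mu \cap Q_0\bigr),
\end{equation*}
valid for $x \in Q_0$ with $d(x)$ small compared to $\ell(Q_0)$. Given such an $x$, I would pick a cube $Q' \ni x$ of side $s = Cd(x)$, with $C = C(n)$ chosen large enough that $Q' \subset Q_0$ and $Q'$ contains a cube $Q_y$ centered at some $y \in \operatorname{supp}\mu \cap Q_0$ with $\ell(Q_y) \sim d(x)$. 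Since $y \in \operatorname{supp}\mu$ forces $|Q_y|_\mu > 0$, (\ref{def AD}) applied to $Q_y$ gives $\mu(Q' \cap Q_0) \ge \mu(Q_y) \gtrsim d(x)^{n-\alpha}$, and hence $M^\beta(\mathbf{1}_{Q_0}\mu)(x) \ge |Q'|^{\beta/n-1}\mu(Q' \cap Q_0) \gtrsim d(x)^{\beta - \alpha} = d(x)^{-\alpha/2}$.

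Squaring and integrating, it suffices to show $\int_{Q_0} d(x)^{-\alpha}\,dx = \infty$. A standard greedy packing argument — take a maximal $r$-separated subset $\{y_j\}$ of $\operatorname{supp}\mu \cap Q_0$, whose cardinality is $\gtrsim \ell(Q_0)^{n-\alpha}/r^{n-\alpha}$ by comparing $|Q_0|_\mu \approx \ell(Q_0)^{n-\alpha}$ with the AD-regular mass $\approx r^{n-\alpha}$ of each covering ball $B(y_j,r)$, so that the disjoint balls $B(y_j,r/2)$ have total Lebesgue measure $\gtrsim r^\alpha \ell(Q_0)^{n-\alpha}$ — yields the Minkowski-type lower bound $|\{x \in Q_0 : d(x) \le r\}| \gtrsim r^\alpha \ell(Q_0)^{n-\alpha}$ for $0 < r \le c\ell(Q_0)$. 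Decomposing into dyadic shells where $d(x) \sim 2^{-k}\ell(Q_0)$, each of Lebesgue measure $\sim (2^{-k}\ell(Q_0))^\alpha \ell(Q_0)^{n-\alpha}$ on which $d(x)^{-\alpha} \sim (2^{-k}\ell(Q_0))^{-\alpha}$, contributes $\sim \ell(Q_0)^{n-\alpha}$ to the integral; summing over $k \ge 0$ diverges, which contradicts (\ref{refinement equal}).

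The main obstacle is the small geometric choice underlying the pointwise bound: the dimensional constant $C$ must be calibrated so that the cube $Q'$ of side $Cd(x)$ about $x$ genuinely contains a support-centered cube of comparable side inside $Q_0$, so that (\ref{def AD}) can be applied to yield the lower bound on $\mu(Q' \cap Q_0)$. Once this routine geometric check is handled, the rest of the argument is standard AD-regularity bookkeeping.
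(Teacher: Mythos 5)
Your argument is essentially correct, but it takes a genuinely different route from the paper's. The paper never looks at the geometry of $\operatorname{supp}\mu$ at all: it uses the counting bound $\#\Gamma_{N}(Q)\lesssim 2^{N\theta }$ with $\theta =n-\alpha <n$ (a consequence of (\ref{def AD}) and bounded overlap of the $3Q^{\prime }$) to find, inside every positive-mass cube, a dyadic subcube $Q^{\ast }$ of comparable side with $\left\vert Q^{\ast }\right\vert _{\mu }=0$; on $Q^{\ast }$ the trivial bound $M^{\beta }(\mathbf{1}_{Q}\mu )\geq \ell (Q)^{\beta -n}\left\vert Q\right\vert _{\mu }$ already produces a contribution $c_{N}\left\vert Q\right\vert _{\mu }$ to the integral, and iterating this construction over the positive-mass cubes at successive generations yields $m\,c_{N}\left\vert Q\right\vert _{\mu }$ for every $m$. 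Your version replaces this combinatorial iteration by the pointwise bound $M^{\beta }(\mathbf{1}_{Q_{0}}\mu )(x)\gtrsim d(x)^{-\alpha /2}$ and a lower Minkowski-content estimate $\left\vert \{x\in Q_{0}:d(x)\leq r\}\right\vert \gtrsim r^{\alpha }\ell (Q_{0})^{n-\alpha }$; the two mechanisms are cousins (both extract a fixed contribution $\approx \left\vert Q_{0}\right\vert _{\mu }$ from each scale), but yours gives the extra geometric information that the divergence is driven by the tubular neighborhoods of the support, at the price of more delicate boundary bookkeeping.

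Two points in your write-up need repair, though neither is fatal. First, the one you flag: for $y\in \operatorname{supp}\mu \cap Q_{0}$ near $\partial Q_{0}$, the mass of a cube centered at $y$ may live almost entirely outside $Q_{0}$, so $\mu (Q^{\prime }\cap Q_{0})\gtrsim d(x)^{n-\alpha }$ can fail; fix this by centering $Q_{0}$ at a point of $\operatorname{supp}\mu $ (legitimate, since you only need one bad cube) and defining $d(x)=\operatorname{dist}(x,\operatorname{supp}\mu \cap \frac{1}{2}Q_{0})$, so that the support-centered cube $Q_{y}$ with $3$-fold dilate inside $Q_{0}$ is available and (\ref{def AD}) gives the lower mass bound. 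Second, the claim that each dyadic shell $\{d\approx 2^{-k}\ell (Q_{0})\}$ has measure $\approx (2^{-k}\ell (Q_{0}))^{\alpha }\ell (Q_{0})^{n-\alpha }$ requires the matching \emph{upper} Minkowski bound with a favorable constant, and subtracting consecutive lower and upper bounds with mismatched constants can give nothing. The clean fix is to skip shells entirely and use the layer-cake formula: $\int_{Q_{0}}d(x)^{-\alpha }dx=\int_{0}^{\infty }\left\vert \{d<\lambda ^{-1/\alpha }\}\right\vert d\lambda \gtrsim \int_{\lambda _{0}}^{\infty }\lambda ^{-1}\ell (Q_{0})^{n-\alpha }d\lambda =\infty $, which uses only your lower bound. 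With these adjustments the proof goes through.
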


\begin{proof}
Suppose that $\mu $ is Ahlfors-David regular of order $\theta $. First we
note that%
\begin{equation*}
\sqrt{A_{2}^{\alpha }\left( \mu ,\mu \right) }\approx \sup_{Q\in \mathcal{P}%
^{n}}\left\vert Q\right\vert ^{\frac{\alpha }{n}-1}\int_{Q}d\mu \approx
\sup_{Q\in \mathcal{P}^{n}}\ell \left( Q\right) ^{\alpha -n}\ell \left(
Q\right) ^{\theta }=1,
\end{equation*}%
if $\theta =n-\alpha $. To show that the left side of (\ref{refinement equal}%
) is infinite for the same choice of $\theta $, we proceed in four steps.
Let $\mathfrak{C}^{\left( N\right) }\left( Q\right) $ denote the collection
of dyadic subcubes $Q^{\prime }$ of $Q$ having side length $\ell \left(
Q^{\prime }\right) =2^{-N}\ell \left( Q\right) $. Throughout the proof,
constants implied by $\approx $ and $\lesssim $ depend only on $\alpha $, $n$
and the Ahlfors-David constants implicit in the definition (\ref{def AD}).

\textbf{Step 1}: Let 
\begin{equation*}
\Gamma _{N}\left( Q\right) \equiv \left\{ Q^{\prime }\in \mathfrak{C}%
^{\left( N\right) }\left( Q\right) :\left\vert Q^{\prime }\right\vert _{\mu
}>0\right\} ,\ \ \ \ \ \text{for }Q\in \mathcal{P}^{n}.
\end{equation*}%
Since $\mu $ is Ahlfors-David regular of order $\theta \equiv n-\alpha $, we
have for any cube $Q\in \mathcal{P}^{n}$ that both%
\begin{eqnarray*}
\sum_{Q^{\prime }\in \Gamma _{N}\left( Q\right) }\left\vert 3Q^{\prime
}\right\vert _{\mu } &\approx &\sum_{Q^{\prime }\in \Gamma _{N}\left(
Q\right) }\ell \left( Q^{\prime }\right) ^{\theta }=\#\Gamma _{N}\left(
Q\right) \cdot 2^{-N\theta }\ell \left( Q\right) ^{\theta }, \\
\sum_{Q^{\prime }\in \Gamma _{N}\left( Q\right) }\left\vert 3Q^{\prime
}\right\vert _{\mu } &\lesssim &\left\vert 3Q\right\vert _{\mu }\ .
\end{eqnarray*}%
Thus we obtain%
\begin{equation*}
\#\Gamma _{N}\left( Q\right) \cdot 2^{-N\theta }\ell \left( Q\right)
^{\theta }\lesssim \left\vert 3Q\right\vert _{\mu }\approx \ell \left(
Q\right) ^{\theta },\ \ \ \ \ \text{if }\left\vert Q\right\vert _{\mu }>0,
\end{equation*}%
and hence%
\begin{equation}
\#\Gamma _{N}\left( Q\right) \lesssim 2^{N\theta },\ \ \ \ \ \text{if }Q\in 
\mathcal{P}^{n}.  \label{card Gamma}
\end{equation}%
In particular there is $N=N_{n,\alpha ,\mu }$ sufficiently large that $%
\mathfrak{C}^{\left( N\right) }\left( Q\right) \setminus \Gamma _{N}\left(
Q\right) \neq \emptyset $ for all cubes $Q\in \mathcal{P}^{n}$.

\textbf{Step 2}: Fix a cube $Q$ and let $N=N_{n,\alpha ,\mu }$ be as in Step
1. Then $\mathfrak{C}^{\left( N\right) }\left( Q\right) \setminus \Gamma
_{N}\left( Q\right) \neq \emptyset $ and so there is $Q^{\ast }\in \mathfrak{%
C}^{\left( N\right) }\left( Q\right) $ with $\left\vert Q^{\ast }\right\vert
_{\mu }=0$. Since 
\begin{equation*}
\inf_{x\in Q}M^{\beta }\left( \mathbf{1}_{Q}\mu \right) \left( x\right) \geq
\ell \left( Q\right) ^{\beta -n}\int_{Q}d\mu \ ,
\end{equation*}%
we then have%
\begin{equation*}
\int_{Q^{\ast }}M^{\beta }\left( \mathbf{1}_{Q}\mu \right) \left( x\right)
^{2}dx\geq \ell \left( Q\right) ^{\alpha -2n}\left( \int_{Q}d\mu \right)
^{2}\ell \left( Q^{\ast }\right) ^{n}\approx 2^{-Nn}\ell \left( Q\right)
^{\theta +\alpha -n}\int_{Q}d\mu .
\end{equation*}%
Set $\Omega _{1}\left( Q\right) \equiv Q^{\ast }$. Since $\theta +\alpha
-n=0 $, there is a positive constant $c_{N}$ such that for $Q\in \mathcal{P}%
^{n}$,%
\begin{equation}
\int_{\Omega _{1}\left( Q\right) }M^{\beta }\left( \mathbf{1}_{Q}\mu \right)
\left( x\right) ^{2}dx\geq c_{N}\int_{Q}d\mu \ .  \label{lower bound}
\end{equation}

\textbf{Step 3}: Again fix a cube $Q$ and let $N=N_{n,\alpha ,\mu }$ be as
in Step 1. Let $\Gamma _{N}\left( Q\right) =\left\{ Q_{k}\right\} _{k=1}^{K}$
where $K\lesssim 2^{N\theta }$ by (\ref{card Gamma}). Then we apply Step 2
to the cube $Q_{k}$ to obtain a cube $Q_{k}^{\ast }$ with $\left\vert
Q_{k}^{\ast }\right\vert _{\mu }=0$ and%
\begin{equation*}
\int_{Q_{k}^{\ast }}M^{\beta }\left( \mathbf{1}_{Q_{k}}\mu \right) \left(
x\right) ^{2}dx\geq c_{N}\int_{Q_{k}}d\mu .
\end{equation*}%
Then with $\Omega _{2}\equiv \dbigcup\limits_{k=1}^{K}Q_{k}^{\ast }$, we
obtain upon summing in $k$ that%
\begin{equation*}
\int_{\Omega _{2}}M^{\beta }\left( \mathbf{1}_{Q}\mu \right) \left( x\right)
^{2}dx\geq c_{N}\int_{Q}d\mu .
\end{equation*}%
Note that $Q_{k}^{\ast }\subset Q_{k}$ where $\left\vert Q_{k}\right\vert
_{\mu }>0$, and that $\left\vert Q^{\ast }\right\vert _{\mu }=0$, which
shows that $Q_{k}^{\ast }\cap Q^{\ast }=\emptyset $ for all $k$, hence $%
\Omega _{1}\cap \Omega _{2}=\emptyset $. Thus we have that 
\begin{equation*}
\int_{\Omega _{1}\cup \Omega _{2}}M^{\beta }\left( \mathbf{1}_{Q}\mu \right)
\left( x\right) ^{2}dx\geq 2c_{N}\int_{Q}d\mu .
\end{equation*}

\textbf{Step 4}: Now repeat Step 3 indefinitely to obtain%
\begin{equation*}
\int_{\Omega _{1}\cup \Omega _{2}\cup ...\cup \Omega _{m}}M^{\beta }\left( 
\mathbf{1}_{Q}\mu \right) \left( x\right) ^{2}dx\geq mc_{N}\int_{Q}d\mu ,\ \
\ \ \ \text{for all }m\geq 1,
\end{equation*}%
which of course shows that%
\begin{equation*}
\int_{Q}M^{\beta }\left( \mathbf{1}_{Q}\mu \right) \left( x\right)
^{2}dx=\infty .
\end{equation*}
\end{proof}

\begin{problem}
The measures $\mu $ in the sharpness examples above are not however
doubling, only reverse doubling. This begs the question of whether or not (%
\ref{refinement}) can hold for all pairs of \emph{doubling} measures, a
question we leave open.
\end{problem}

Finally, Lemma \ref{AD lemma} shows the failure of the trace inequality $I^{%
\frac{\alpha }{2}}:L^{2}\rightarrow L^{2}\left( \partial \Omega \right) $
for a domain $\Omega \subset \mathbb{R}^{n}$ when $\partial \Omega $ is an
Ahlfors-David regular set of order $n-\alpha $. For example $I^{\frac{1}{2}%
}:L^{2}\rightarrow L^{2}\left( \partial \Omega \right) $ fails in the plane
if $\partial \Omega $ is the Cantor dust fractal - Example \ref{ex fail} is
the case when $\partial \Omega $ is a line.

\section{Failure of $\mathcal{BCT}$ for the dyadic Hilbert transform}

We do not know if the analogous inequality for the Hilbert transform on the
real line, i.e.%
\begin{equation*}
\int_{Q}\left\vert H\left( \mathbf{1}_{Q}\sigma \right) \right\vert d\omega
\leq C\sqrt{A_{2}\left( \sigma ,\omega \right) }\sqrt{\left\vert
Q\right\vert _{\sigma }\left\vert Q\right\vert _{\omega }},\ \ \ \ \ \text{%
for all intervals }Q,
\end{equation*}%
holds, but we can show that the analogous question for the dyadic Hilbert
transform is answered in the negative here (no it can fail) using an
adaptation of Nazarov's Bellman construction in \cite{NaVo}.

The following Bilinear Cube Testing condition for the Hilbert transform $H$
is of course implied by restricted weak type for $H$:%
\begin{equation}
\left\vert \int_{Q}H\left( \mathbf{1}_{Q}\sigma \right) d\omega \right\vert
\leq \mathcal{BCT}_{H}\sqrt{\left\vert Q\right\vert _{\sigma }\left\vert
Q\right\vert _{\omega }},\ \ \ \ \ \text{for all intervals }Q\text{.}
\label{BCT}
\end{equation}%
Unfortunately we are unable to determine if $\mathcal{BCT}_{H}<\infty $.
Instead, we will prove here Theorem \ref{beta}, that shows the discrete
dyadic form of the inequality fails, i.e. that the inequality%
\begin{equation*}
\left\vert \int_{Q}H^{\limfunc{dy}}\left( \mathbf{1}_{Q}\sigma \right)
d\omega \right\vert \leq \mathcal{BCT}_{H^{\limfunc{dy}}}\sqrt{\left\vert
Q\right\vert _{\sigma }\left\vert Q\right\vert _{\omega }},\ \ \ \ \ \text{%
for all dyadic intervals }Q\subset \left[ 0,1\right) \text{,}
\end{equation*}%
fails. In fact, Theorem \ref{beta} is an easy consequence of (\ref{def H
dyadic}) and the following simpler variant of a Bellman construction from 
\cite{NaVo}.

\subsection{The dyadic Bellman construction}

\begin{lemma}
\label{dyadic restricted}Let $0<\tau <1$. Then for every $\Gamma >1$, there
exists a pair of weights $\left( U,V\right) $ on the unit interval $%
I^{0}\equiv \left[ 0,1\right] $, and a positive integer $M\in \mathbb{N}$,
such that each of the functions $U,V$ is positive on $\left[ 0,1\right] $
and constant\footnote{%
We do not actually need this constant property here since we are unable to
apply the `supervisor' argument from \cite{NaVo} to extend the
counterexample to the $\alpha $-fractional Riesz transform on the line when $%
\alpha >0.$} on every interval $K\in \mathcal{D}^{0}$ having side length $%
2^{-M}$, and moreover,%
\begin{eqnarray*}
&&\sum_{I\in \mathcal{D}^{0}}\left( \bigtriangleup _{I}V\right) \left(
E_{I}U\right) \left\vert I\right\vert >\Gamma \sqrt{\left( E_{I^{0}}U\right)
\left( E_{I^{0}}V\right) }, \\
&&\left( E_{I}U\right) \left( E_{I}V\right) \leq 1,\ \ \ \ \ \text{for all }%
I\in \mathcal{D}^{0}, \\
&&1-\tau <\frac{E_{I_{-}}U}{E_{I_{+}}U},\frac{E_{I_{-}}V}{E_{I_{+}}V}<1+\tau
,\ \ \ \ \ \text{for all }I\in \mathcal{D}^{0}.
\end{eqnarray*}
\end{lemma}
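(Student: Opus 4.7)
The plan is to specify, for every dyadic interval $I \in \mathcal{D}^{0}$ of depth at most $M$ (the integer $M$ will be chosen depending on $\Gamma$), a pair of averages $(x_{I}, y_{I}) = (E_{I}U, E_{I}V)$. The $2^{M}$ leaf values then recover $U$ and $V$ as positive, piecewise-constant functions. Writing the local splits at each node as $x_{I_{\pm}} = x_{I} \pm a_{I}$ and $y_{I_{\pm}} = y_{I} \pm b_{I}$, the three bulleted conditions of the lemma translate to (i) the bilinear inequality $\sum_{I} 2 b_{I} x_{I} |I| > \Gamma \sqrt{x_{I^{0}} y_{I^{0}}}$, (ii) Muckenhoupt $x_{I}y_{I} \leq 1$ at every node, and (iii) $|a_{I}|/x_{I}, |b_{I}|/y_{I} < \tau/(2+\tau)$, which keeps both doubling ratios strictly inside $(1-\tau, 1+\tau)$.

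Following the Bellman-function method of \cite{NaVo}, the fundamental local step is: at each node we choose $b_{I} = \beta y_{I}$ with $\beta = \tau/(2+\tau)$, saturating the $V$-doubling, and $a_{I} = -\beta x_{I}$, so that $x$ and $y$ are split in opposite directions. Then $x_{I_{\pm}} y_{I_{\pm}} = x_{I} y_{I} (1-\beta^{2}) \leq x_{I}y_{I} \leq 1$, so Muckenhoupt is preserved at both children; the $x$-doubling ratio is $(1-\beta)/(1+\beta) = 1/(1+\tau) \in (1-\tau, 1+\tau)$; and the local contribution to the bilinear sum is $2\beta x_{I} y_{I} |I|$, positive because $b_{I} > 0$.

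The main argument is then to accumulate these contributions over many generations. A naive application of the base rule to every node makes $x_{I}y_{I}$ decay geometrically in depth, and the total sum is capped at $O(1/\tau)$, independent of $M$. The key insight from \cite{NaVo} is to vary the calibration of $(a_{I}, b_{I})$ across siblings and successive generations, so that the Muckenhoupt mass $\sum_{|I|=2^{-k}} x_{I}y_{I}|I|$ stays bounded below by a positive constant $c(\tau)$ uniformly in the level $k$. Summing the per-level contributions $2\beta \sum_{|I|=2^{-k}} x_{I}y_{I}|I|$ across all $M$ generations then yields a bilinear total of order $c(\tau)\tau M$, which exceeds $\Gamma$ once $M \gtrsim \Gamma/(\tau c(\tau))$. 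Reading off the leaf values produces the desired $U$ and $V$.

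The principal obstacle is the non-convexity of the Muckenhoupt admissible region $\{(x,y): x,y > 0,\ xy \leq 1\}$. Indeed, if $(a, 1/a)$ and $(b, 1/b)$ both lie on the hyperbola $xy = 1$ with $a \neq b$, their midpoint has product $(a+b)^{2}/(4ab) > 1$ by strict AM-GM; equivalently, a non-trivial split at a node with $x_{I}y_{I}$ near $1$ necessarily pushes both children strictly off the hyperbola, depleting the reserve $1 - x_{I}y_{I}$. Reconciling the need to keep $x_{I}y_{I}$ large (to maximize the per-node contribution $2\beta x_{I}y_{I}|I|$) against the constraint $x_{I}y_{I} \leq 1$, across $2^{k}$ sibling nodes at each of $M$ generations, is the hierarchical bookkeeping problem that the Bellman construction of \cite{NaVo} is designed to solve, and it is this book-keeping, rather than any individual split, that is the substantive content of the proof.
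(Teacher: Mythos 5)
Your setup is sound and you have correctly identified where the difficulty lies, but the proof is not complete: the step you defer to ``the hierarchical bookkeeping \ldots of \cite{NaVo}'' is the entire content of the lemma, and it is never carried out. Concretely, you assert that one can vary the calibration of $(a_{I},b_{I})$ across siblings and generations so that $\sum_{|I|=2^{-k}}x_{I}y_{I}|I|\geq c(\tau)>0$ uniformly in $k$, but you give no rule that achieves this. The tension is real: any split with $a_{I}b_{I}<0$ (your base rule) multiplies the product at \emph{both} children by $1-\beta ^{2}$, while any split with $a_{I}b_{I}>0$ multiplies the product at one child by $(1+\beta )^{2}$ and so threatens the constraint $x_{I}y_{I}\leq 1$ there; designing a scheme that recharges the level mass without overshooting the hyperbola, simultaneously at all $2^{k}$ nodes for $M$ arbitrarily large, is precisely what must be proved. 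Nor can this be outsourced to \cite{NaVo}: as the paper's closing Remark emphasizes, the explicit Bellman construction there addresses the \emph{testing} functional, whose concavity defect is the quadratic term $y_{2}^{2}x_{1}$ rather than the linear term $2|y_{2}|x_{1}$ relevant here, so the two problems are genuinely different and the needed bookkeeping is not in that reference.

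The paper avoids the bookkeeping altogether by running your local step in reverse, non-constructively. It defines $\mathcal{B}_{\tau }(x)$ as the supremum of the normalized bilinear sums over all admissible pairs with prescribed averages $x=(x_{1},x_{2})$, and shows (Theorems \ref{quant bil} and \ref{quant bil tau}) that concatenating two near-extremizers on the two halves of $J$ forces
\begin{equation*}
\frac{\mathcal{B}_{\tau }\left( x+y\right) +\mathcal{B}_{\tau }\left(
x-y\right) }{2}+2\left\vert y_{2}\right\vert x_{1}\leq \mathcal{B}_{\tau
}\left( x\right) .
\end{equation*}
If $\mathcal{B}_{\tau }$ were finite everywhere it would be concave, hence twice differentiable a.e.\ by Busemann--Feller/Alexandrov, and Taylor's formula at such a point would give $2\left\vert y_{2}\right\vert x_{1}\leq C\left\vert y\right\vert ^{2}$ for small $y$, impossible since $x_{1}>0$. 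Hence $\mathcal{B}_{\tau }(x)=\infty $ somewhere, which produces the arbitrarily large finite truncated sums required by the lemma. To complete your argument you must either exhibit the recharging scheme explicitly (and verify all three constraints at every node) or adopt this contradiction argument; as written, the proposal has a genuine gap at its main step.
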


To prove this lemma we use the Bellman function%
\begin{equation}
\mathcal{B}\left( x\right) \equiv \sup_{J\in \mathcal{D}^{0}}\left\{ \frac{1%
}{\left\vert J\right\vert }\sum_{I\in \mathcal{D}^{0}:\ I\subset J}\left(
\bigtriangleup _{I}V\right) \left( E_{I}U\right) \left\vert I\right\vert
:\left( U,V\right) \in \mathcal{F}_{J;x}\right\} ,  \label{linear Bell}
\end{equation}%
for $x=\left( x_{1},x_{2}\right) \in \left( 0,\infty \right) ^{2}$ with $%
x_{1}x_{2}<1$, in analogy with that in \cite{NaVo}, where $\mathcal{F}_{J;x}$
consists of those pairs $\left( U,V\right) $ of positive functions on $J$
such that%
\begin{eqnarray*}
E_{J}U &=&x_{1},\ \ \ E_{J}V=x_{2}, \\
\text{and }\left( E_{I}U\right) \left( E_{I}V\right) &<&1,\ \ \ \ \ \text{%
for all }I\in \mathcal{D}^{0}\text{ with }I\subset J.
\end{eqnarray*}%
Note that the averages of $U$ and $V$ are only fixed to be $x_{1}$ and $%
x_{2} $ respectively at the interval $J$. Moreover, while it is the case
that $\bigtriangleup _{I}V$ can be negative, an appropriate switching of
children for each parent replaces $\bigtriangleup _{I}V$ with $\left\vert
\bigtriangleup _{I}V\right\vert $ while leaving $E_{I}U$ unaffected, and so
we also have%
\begin{equation*}
\mathcal{B}\left( x\right) \equiv \sup_{J\in \mathcal{D}^{0}}\left\{ \frac{1%
}{\left\vert J\right\vert }\sum_{I\in \mathcal{D}^{0}:\ I\subset
J}\left\vert \bigtriangleup _{I}V\right\vert \left( E_{I}U\right) \left\vert
I\right\vert :\left( U,V\right) \in \mathcal{F}_{J;x}\right\} ,
\end{equation*}%
which shows in particular that $\mathcal{B}\left( x\right) $ is positive.

The Bellman function $\mathcal{B}\left( x\right) $ satisfies the rescaling
property,%
\begin{equation}
\frac{1}{\left\vert \widehat{J}\right\vert }\sum_{I\in \mathcal{D}^{0}:\
I\subset \widehat{J}}\left\vert \bigtriangleup _{I}\widehat{V}\right\vert
\left( E_{I}\widehat{U}\right) \left\vert I\right\vert =\frac{1}{\left\vert
J\right\vert }\sum_{I\in \mathcal{D}^{0}:\ I\subset J}\left\vert
\bigtriangleup _{I}V\right\vert \left( E_{I}U\right) \left\vert I\right\vert
,  \label{rescale}
\end{equation}%
where $\left( \widehat{U},\widehat{V}\right) =\left(
S_{a,b}U,S_{a,b}V\right) \in \mathcal{F}_{J;x}$ with $S_{a,b}f\left(
z\right) =f\left( T_{a,b}^{-1}z\right) $ and $T_{a,b}y=ay+b$, and where $%
\widehat{J}=T_{a,b}J$ with $a>0$ and $b\in \mathbb{R}$. Indeed, the affine
map $T_{a,b}$ takes an interval $I$ to an interval $T_{a,b}I$ with $%
\left\vert T_{a,b}I\right\vert =a\left\vert I\right\vert $, and preserves
the dyadic structures within the intervals $I$ and $T_{a,b}I$. Moreover, if $%
a=2^{k}$ and $b=2^{k}\ell $ for some $k\in \mathbb{Z}$ and $\ell \in \mathbb{%
N}$, then $I\in \mathcal{D}$ if and only if $T_{a,b}I\in \mathcal{D}$. Note
that $S_{a,b}$ takes functions $f$ supported in $I$ to functions $S_{a,b}f$
supported in $T_{a,b}I$, and moreover preserves averages over all dyadic
intervals $I$, i.e. 
\begin{equation*}
E_{T_{a,b}I}\left( S_{a,b}f\right) =\frac{1}{\left\vert T_{a,b}I\right\vert }%
\int_{T_{a,b}I}f\left( T_{a,b}^{-1}z\right) dz=\frac{1}{\left\vert
T_{a,b}I\right\vert }\int_{I}f\left( y\right) ady=\frac{1}{a\left\vert
I\right\vert }\int_{I}f\left( y\right) ady=E_{I}f,
\end{equation*}%
as well as the `difference averages',%
\begin{eqnarray*}
\bigtriangleup _{T_{a,b}I}\left( S_{a,b}f\right) &=&E_{\left(
T_{a,b}I\right) _{-}}\left( S_{a,b}f\right) -E_{\left( T_{a,b}I\right)
_{+}}\left( S_{a,b}f\right) \\
&=&\frac{1}{\left\vert \left( T_{a,b}I\right) _{-}\right\vert }\int_{\left(
T_{a,b}I\right) _{-}}S_{a,b}f\left( z\right) dz-\frac{1}{\left\vert \left(
T_{a,b}I\right) _{+}\right\vert }\int_{\left( T_{a,b}I\right)
_{+}}S_{a,b}f\left( z\right) dz \\
&=&\frac{1}{\left\vert \left( T_{a,b}I\right) _{-}\right\vert }\int_{\left(
T_{a,b}I\right) _{-}}f\left( T_{a,b}^{-1}z\right) dz-\frac{1}{\left\vert
\left( T_{a,b}I\right) _{+}\right\vert }\int_{\left( T_{a,b}I\right)
_{+}}f\left( T_{a,b}^{-1}z\right) dz \\
&=&\frac{1}{a\left\vert I_{-}\right\vert }\int_{I_{-}}f\left( y\right) ady-%
\frac{1}{a\left\vert I_{+}\right\vert }\int_{I_{+}}f\left( y\right)
ady=E_{I_{-}}f-E_{I_{+}}f=\bigtriangleup _{I}f.
\end{eqnarray*}

Now fix dyadic intervals $J$ and $\widehat{J}$ in $\mathcal{D}^{0}$. Choose
an affine map $T_{a,b}$ with $a=2^{k}$ and $b=2^{k}\ell $, for some $k,\ell
\in \mathbb{Z}$, that takes the interval $J$ one-to-one and onto the
interval $\widehat{J}=T_{a,b}J$. Define functions $\widehat{U}=S_{a,b}U$ and 
$\widehat{V}=S_{a,b}V$. Then we have%
\begin{eqnarray*}
&&\frac{1}{\left\vert \widehat{J}\right\vert }\sum_{I\in \mathcal{D}^{0}:\
I\subset \widehat{J}}\left\vert \bigtriangleup _{I}\widehat{V}\right\vert
\left( E_{I}\widehat{U}\right) \left\vert I\right\vert =\frac{1}{\left\vert
T_{a,b}J\right\vert }\sum_{I\in \mathcal{D}^{0}:\ I\subset
T_{a,b}J}\left\vert \bigtriangleup _{I}\left( S_{a,b}V\right) \right\vert
E_{I}\left( S_{a,b}U\right) \left\vert I\right\vert \\
&=&\frac{1}{\left\vert T_{a,b}J\right\vert }\sum_{I\in \mathcal{D}^{0}:\
I\subset J}\left\vert \bigtriangleup _{T_{a,b}I}\left( S_{a,b}V\right)
\right\vert \left( E_{T_{a,b}I}\left( S_{a,b}U\right) \right) \left\vert
T_{a,b}I\right\vert \\
&=&\frac{1}{a\left\vert J\right\vert }\sum_{I\in \mathcal{D}^{0}:\ I\subset
J}\left\vert \bigtriangleup _{I}V\right\vert \left( E_{I}U\right)
a\left\vert I\right\vert =\frac{1}{\left\vert J\right\vert }\sum_{I\in 
\mathcal{D}^{0}:\ I\subset J}\left\vert \bigtriangleup _{I}V\right\vert
\left( E_{I}U\right) \left\vert I\right\vert ,
\end{eqnarray*}%
and also $\left( \widehat{U},\widehat{V}\right) =\left(
S_{a,b}U,S_{a,b}V\right) \in \mathcal{F}_{\widehat{J};x}$ since%
\begin{equation*}
E_{\widehat{J}}\left( \widehat{U}\right) =E_{T_{a,b}J}\left( S_{a,b}U\right)
=E_{J}U=x_{1}\text{ and }E_{\widehat{J}}\left( \widehat{V}\right)
=E_{T_{a,b}J}\left( S_{a,b}V\right) =E_{J}V=x_{2}\ .
\end{equation*}

Now let%
\begin{equation}
\Omega \equiv \left\{ x=\left( x_{1},x_{2}\right) \in \left( 0,\infty
\right) ^{2}:x_{1}x_{2}<1\right\} .  \label{def Omega}
\end{equation}%
Assuming that $\mathcal{B}\left( x\right) <\infty $ for all $x\in \Omega $,
we will derive a contradiction from Theorem \ref{quant bil} below, thus
concluding that $\mathcal{B}\left( x\right) $ must be $\infty $ for some $%
x\in \Omega $, and so in particular that sup$_{x\in \Omega }\frac{\mathcal{B}%
\left( x\right) }{\sqrt{x_{1}x_{2}}}=\infty $. In any event, this shows that
for any $\Gamma >1$ there is $x\in \Omega $, $J\in \mathcal{D}^{0}$ and $%
\left( U,V\right) \in \mathcal{F}_{J;x}$ such that%
\begin{equation*}
\frac{1}{\left\vert J\right\vert \sqrt{\left( E_{J}U\right) \left(
E_{J}U\right) }}\sum_{I\in \mathcal{D}^{0}:\ I\subset J}\left(
\bigtriangleup _{I}V\right) \left( E_{I}U\right) \left\vert I\right\vert
>\Gamma ,
\end{equation*}%
which if $J=I^{0}$, as we may assume, gives%
\begin{equation*}
\sum_{I\in \mathcal{D}^{0}}\left\vert \bigtriangleup _{I}V\right\vert \left(
E_{I}U\right) \left\vert I\right\vert >\Gamma \sqrt{\left( E_{I^{0}}U\right)
\left( E_{I^{0}}U\right) },
\end{equation*}%
since $\left\vert I^{0}\right\vert =1$. This will complete the proof of
Lemma \ref{dyadic restricted} upon restricting the sum of the nonnegative
terms $\left\vert \bigtriangleup _{I}V\right\vert \left( E_{I}U\right)
\left\vert I\right\vert $ for $I\in \mathcal{D}^{0}$ to intervals $I$ of
side length at least $2^{-M}$ for a sufficiently large $M\in \mathbb{N}$.

We begin by establishing a very strict concavity property of $\mathcal{B}%
\left( x\right) $ in $\Omega $.

\begin{theorem}
\label{quant bil}Assume that $\mathcal{B}\left( x\right) <\infty $ for all\ $%
x\in \Omega $. If $y=\left( y_{1},y_{2}\right) $ is such that $x,x+y,x-y\in
\Omega $, then%
\begin{equation*}
\frac{\mathcal{B}\left( x+y\right) +\mathcal{B}\left( x-y\right) }{2}%
+2\left\vert y_{2}\right\vert x_{1}-\mathcal{B}\left( x\right) \leq 0.
\end{equation*}
\end{theorem}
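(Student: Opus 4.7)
The plan is to establish the concavity inequality by the standard Bellman ``gluing'' argument: from near-optimal test pairs for $\mathcal{B}(x+y)$ and $\mathcal{B}(x-y)$ on two dyadic children, build a pair competing in $\mathcal{F}_{J;x}$ for $\mathcal{B}(x)$, and extract the parent-interval contribution $2|y_2| x_1$ from $\Delta_J V$. The assumption $\mathcal{B}(x)<\infty$ on $\Omega$ is what makes the suprema attainable up to arbitrarily small error.

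More precisely, fix $\varepsilon > 0$ and $x, y$ with $x \pm y \in \Omega$. By the definition of $\mathcal{B}$ there exist dyadic intervals $J_{\pm} \in \mathcal{D}^0$ and pairs $(U^{\pm}, V^{\pm}) \in \mathcal{F}_{J_{\pm};\, x\pm y}$ such that
\begin{equation*}
\frac{1}{|J_{\pm}|}\sum_{I \in \mathcal{D}^0,\, I \subset J_{\pm}}
|\bigtriangleup_I V^{\pm}|\, (E_I U^{\pm})\, |I|
\;\geq\; \mathcal{B}(x \pm y) - \varepsilon.
\end{equation*}
Using the rescaling identity \eqref{rescale} with appropriate affine maps $T_{a,b}$ (choosing $a = 2^k$, $b = 2^k \ell$), I may assume without loss of generality that $J_{-}$ and $J_{+}$ are precisely the two dyadic children of a single $J \in \mathcal{D}^0$, and that the near-optimality above is preserved.

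Now I glue: define $U = U^{-}\mathbf{1}_{J_{-}} + U^{+}\mathbf{1}_{J_{+}}$ and $V = V^{-}\mathbf{1}_{J_{-}} + V^{+}\mathbf{1}_{J_{+}}$ on $J$, and, by possibly swapping the roles of the two children (equivalently replacing $y$ by $-y$), arrange that $E_{J_{-}}V \geq E_{J_{+}}V$, so $\bigtriangleup_J V = 2|y_2| \geq 0$. Then $E_J U = \tfrac{1}{2}\bigl((x_1+y_1)+(x_1-y_1)\bigr) = x_1$ and similarly $E_J V = x_2$, so $(U,V) \in \mathcal{F}_{J;x}$: the Muckenhoupt-type constraint $(E_I U)(E_I V) < 1$ holds for $I = J$ because $x \in \Omega$, and for all $I \subsetneq J$ because it holds within each sub-collection by admissibility of $(U^{\pm}, V^{\pm})$. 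Splitting the sum over $I \subset J$ into the contribution from $I = J$ and the two subsums over $J_{\pm}$ gives
\begin{equation*}
\frac{1}{|J|} \sum_{I \subset J} |\bigtriangleup_I V|\,(E_I U)\, |I|
\;=\; |\bigtriangleup_J V|\,(E_J U) \,+\, \frac{1}{2}\!\!\sum_{I \subset J_-}\!\! \frac{|\bigtriangleup_I V|(E_I U)|I|}{|J_{-}|}
\,+\, \frac{1}{2}\!\!\sum_{I \subset J_+}\!\! \frac{|\bigtriangleup_I V|(E_I U)|I|}{|J_{+}|},
\end{equation*}
where I used $|J_{\pm}| = |J|/2$.

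The parent term equals $2|y_2|\, x_1$, and by the near-optimality inherited through the rescaling the two sub-sums are bounded below by $\mathcal{B}(x+y) - \varepsilon$ and $\mathcal{B}(x-y) - \varepsilon$ respectively. Taking the supremum over $(U,V) \in \mathcal{F}_{J;x}$ on the left (using the absolute-value version of $\mathcal{B}$ derived in the excerpt) yields
\begin{equation*}
\mathcal{B}(x) \;\geq\; 2|y_2| x_1 + \frac{\mathcal{B}(x+y) + \mathcal{B}(x-y)}{2} - \varepsilon,
\end{equation*}
and letting $\varepsilon \to 0$ completes the proof. The only delicate point to be careful about is the child-swap that converts $\bigtriangleup_J V$ to $|\bigtriangleup_J V| = 2|y_2|$ while preserving the value of $E_J U = x_1$ (which is symmetric in the two children, so is unaffected); this is the main bookkeeping obstacle, but it is exactly the maneuver that motivated rewriting $\mathcal{B}$ with $|\bigtriangleup_I V|$ in place of $\bigtriangleup_I V$ in the excerpt.
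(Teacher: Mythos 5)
Your proposal is correct and follows essentially the same route as the paper: take $\eta$-maximizing pairs for $x\pm y$, rescale them onto the two dyadic children of a fixed $J$, glue, verify membership in $\mathcal{F}_{J;x}$, and peel off the parent term $\left\vert \bigtriangleup _{J}V\right\vert \left( E_{J}U\right) =2\left\vert y_{2}\right\vert x_{1}$. The only cosmetic difference is that your child-swap is unnecessary once one works with the absolute-value form of $\mathcal{B}$, which is exactly how the paper handles the sign of $\bigtriangleup _{J}V$.
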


\begin{proof}
Fix an interval $J\in \mathcal{D}^{0}$, which we could of course take to be $%
I^{0}=\left[ 0,1\right) $. Consider two pairs $\left( U_{+},V_{+}\right) $
and $\left( U_{-},V_{-}\right) $ with corresponding intervals $J_{x+y}$ and $%
J_{x-y}$ that are `$\eta $-maximizing' for $x+y$ and $x-y$ respectively with 
$\eta >0$, by which we mean that%
\begin{eqnarray*}
\mathcal{B}\left( x+y\right) -\eta &<&\frac{1}{\left\vert J_{x+y}\right\vert 
}\sum_{I\in \mathcal{D}^{0}:\ I\subset J_{x+y}}\left\vert \bigtriangleup
_{I}V_{+}\right\vert \left( E_{I}U_{+}\right) \left\vert I\right\vert ,\ \ \
\ \ \text{for }E_{J_{x+y}}U_{+}=x_{1}+y_{1},E_{J_{x+y}}V_{+}=x_{2}+y_{2}, \\
\mathcal{B}\left( x-y\right) -\eta &<&\frac{1}{\left\vert J_{x-y}\right\vert 
}\sum_{I\in \mathcal{D}^{0}:\ I\subset J_{x-y}}\left\vert \bigtriangleup
_{I}V_{-}\right\vert \left( E_{I}U_{-}\right) \left\vert I\right\vert ,\ \ \
\ \ \text{for }E_{J_{x-y}}U_{-}=x_{1}-y_{1},E_{J_{x-y}}V_{-}=x_{2}-y_{2}.
\end{eqnarray*}%
Moreover, we may assume that all of the weights above are constant on
sufficiently small intervals. By rescaling with appropriate maps $T_{a,b}$
and $S_{a,b}$ as in (\ref{rescale}) above, we may suppose that the dyadic
intervals $J_{x+y},J_{x-y}$ have the form $J_{+},J_{-}$ respectively, where $%
J$ is the interval fixed at the beginning of the proof, and moreover that $%
U_{\pm },V_{\pm }$ are supported in $J_{\pm }$.

Following \cite{NaVo}\ we now construct a pair $\left( \widetilde{U},%
\widetilde{V}\right) $ supported in $J$ satisfying 
\begin{equation*}
\widetilde{U}\equiv \left\{ 
\begin{array}{ccc}
U_{+} & \text{ on } & J_{+} \\ 
U_{-} & \text{ on } & J_{-} \\ 
0 & \text{ on } & J^{c}%
\end{array}%
\right. \text{ and }\widetilde{V}\equiv \left\{ 
\begin{array}{ccc}
V_{+} & \text{ on } & J_{+} \\ 
V_{-} & \text{ on } & J_{-} \\ 
0 & \text{ on } & J^{c}%
\end{array}%
\right. .
\end{equation*}%
We claim that $\left( \widetilde{U},\widetilde{V}\right) \in \mathcal{F}%
_{J;x}$. Indeed,%
\begin{eqnarray*}
E_{J}\widetilde{U} &=&\frac{1}{\left\vert J\right\vert }\int_{J}\widetilde{U}%
\left( x\right) dx=\frac{1}{\left\vert J\right\vert }\int_{J_{+}}U_{+}\left(
x\right) dx+\frac{1}{\left\vert J\right\vert }\int_{J_{-}}U_{-}\left(
x\right) dx \\
&=&\frac{1}{2}\left\{ \frac{1}{\left\vert J_{+}\right\vert }%
\int_{J_{+}}U_{+}\left( x\right) dx+\frac{1}{\left\vert J_{-}\right\vert }%
\int_{J_{-}}U_{-}\left( x\right) dx\right\} \\
&=&\frac{1}{2}\left\{ E_{J_{+}}\widetilde{U}+E_{J_{-}}\widetilde{U}\right\} =%
\frac{1}{2}\left\{ x_{1}+y_{1}+x_{1}-y_{1}\right\} =x_{1}\ ,
\end{eqnarray*}%
and similarly $E_{J}\widetilde{V}=x_{2}$, and of course then%
\begin{equation*}
\left( E_{J}\widetilde{U}\right) \left( E_{J}\widetilde{V}\right)
=x_{1}x_{2}<1.
\end{equation*}%
Turning next to the strict dyadic subintervals $I$ of $J$ we have for $%
I\subset J_{+}$,%
\begin{eqnarray*}
E_{I}\widetilde{U} &=&E_{I}U_{+}\ ,\ \ \ \bigtriangleup _{I}\widetilde{U}%
=\bigtriangleup _{I}U_{+}\ , \\
E_{I}\widetilde{V} &=&E_{I}V_{+}\ ,\ \ \ \bigtriangleup _{I}\widetilde{V}%
=\bigtriangleup _{I}V_{+}\ ,
\end{eqnarray*}%
and for $I\subset J_{-}$,%
\begin{eqnarray*}
E_{I}\widetilde{U} &=&E_{I}U_{-}\ ,\ \ \ \bigtriangleup _{I}\widetilde{U}%
=\bigtriangleup _{I}U_{-}\ , \\
E_{I}\widetilde{V} &=&E_{I}V_{-}\ ,\ \ \ \bigtriangleup _{I}\widetilde{V}%
=\bigtriangleup _{I}V_{-}\ .
\end{eqnarray*}%
Consequently we obtain%
\begin{equation*}
\left( E_{I}\widetilde{U}\right) \left( E_{I}\widetilde{V}\right) <1,
\end{equation*}%
which completes the proof of our claim that $\left( \widetilde{U},\widetilde{%
V}\right) \in \mathcal{F}_{J;x}$.

Note that we also have%
\begin{equation*}
\bigtriangleup _{J}\widetilde{V}=E_{J_{-}}\widetilde{V}-E_{J_{+}}\widetilde{V%
}=E_{J_{-}}V_{-}-E_{J_{+}}V_{+}=\left[ \left( x_{2}-y_{2}\right) -\left(
x_{2}+y_{2}\right) \right] =-2y_{2}\ .
\end{equation*}%
Then with 
\begin{equation*}
\mathcal{L}_{J}\left( f,g\right) \equiv \sum_{I\in \mathcal{D}^{0}:\
I\subset J}\left\vert \bigtriangleup _{I}g\right\vert \left( E_{I}f\right)
\left\vert I\right\vert ,\ \ \ \ \ \text{for }\left( f,g\right) \in \mathcal{%
F}_{J;x}\ ,
\end{equation*}%
we have%
\begin{eqnarray*}
&&\mathcal{B}\left( x\right) \geq \frac{1}{\left\vert J\right\vert }\mathcal{%
L}_{J}\left( \widetilde{U},\widetilde{V}\right) =\left\vert \bigtriangleup
_{J}\widetilde{V}\right\vert \left( E_{J}\widetilde{U}\right) +\frac{1}{%
\left\vert J\right\vert }\sum_{I\in \mathcal{D}^{0}:\ I\subset
J_{+}}\left\vert \bigtriangleup _{I}\widetilde{V}\right\vert \left( E_{I}%
\widetilde{U}\right) \left\vert I\right\vert +\frac{1}{\left\vert
J\right\vert }\sum_{I\in \mathcal{D}^{0}:\ I\subset J_{-}}\left\vert
\bigtriangleup _{I}\widetilde{V}\right\vert \left( E_{I}\widetilde{U}\right)
\left\vert I\right\vert \\
&=&2\left\vert y_{2}\right\vert x_{1}+\frac{1}{2}\frac{1}{\left\vert
J_{+}\right\vert }\sum_{I\in \mathcal{D}^{0}:\ I\subset J_{+}}\left\vert
\bigtriangleup _{I}V_{+}\right\vert \left( E_{I}U_{+}\right) \left\vert
I\right\vert +\frac{1}{2}\frac{1}{\left\vert J_{-}\right\vert }\sum_{I\in 
\mathcal{D}^{0}:\ I\subset J_{-}}\left\vert \bigtriangleup
_{I}V_{-}\right\vert \left( E_{I}U_{-}\right) \left\vert I\right\vert \\
&>&2\left\vert y_{2}\right\vert x_{1}+\frac{1}{2}\left\{ \mathcal{B}\left(
x+y\right) -\eta +\mathcal{B}\left( x-y\right) -\eta \right\} =2\left\vert
y_{2}\right\vert x_{1}+\frac{\mathcal{B}\left( x+y\right) +\mathcal{B}\left(
x-y\right) }{2}-\eta .
\end{eqnarray*}%
Since $\eta >0$ is arbitrary, this gives%
\begin{equation*}
\frac{\mathcal{B}\left( x+y\right) +\mathcal{B}\left( x-y\right) }{2}%
+2\left\vert y_{2}\right\vert x_{1}-\mathcal{B}\left( x\right) \leq 0,
\end{equation*}%
and this completes the proof of Theorem \ref{quant bil}.
\end{proof}

We may assume that $\mathcal{B}\left( x\right) $ is finite everywhere on $%
\Omega $, since otherwise we are done. Then Theorem \ref{quant bil} shows in
particular that $\mathcal{B}\left( x\right) $ is concave on $\Omega $, and
so by a result of Buseman and Feller \cite{BuFe} (extended to $\mathbb{R}%
^{n} $ by Alexandrov \cite{Ale}), $\mathcal{B}\left( x\right) $ is
differentiable to second order for almost every $x\in \Omega $. But if the
Bellman function $\mathcal{B}$ is twice differentiable at a fixed $x\in
\Omega $, Taylor's formula gives%
\begin{eqnarray*}
\mathcal{B}\left( x\pm y\right) &=&\mathcal{B}\left( x\right) \pm \left(
y\cdot \nabla \right) \mathcal{B}\left( x\right) +\frac{1}{2}y^{\limfunc{tr}%
}\nabla ^{2}B\left( x\right) y+o\left( \left\vert y\right\vert ^{2}\right) ,
\\
\text{i.e. }\frac{\mathcal{B}\left( x+y\right) +\mathcal{B}\left( x-y\right) 
}{2} &=&\mathcal{B}\left( x\right) +\frac{1}{2}y^{\limfunc{tr}}\nabla
^{2}B\left( x\right) y+o\left( \left\vert y\right\vert ^{2}\right) ,
\end{eqnarray*}%
for sufficiently small $\left\vert y\right\vert $, and then the full force
of Theorem \ref{quant bil} shows that%
\begin{eqnarray*}
&&\frac{1}{2}y^{\limfunc{tr}}\nabla _{x}^{2}B\left( x\right) y+o\left(
\left\vert y\right\vert ^{2}\right) +2\left\vert y_{2}\right\vert x_{1}\leq
0, \\
\text{i.e. } &&2\left\vert y_{2}\right\vert x_{1}\leq C\left\vert
y\right\vert ^{2}\text{ for sufficiently small }\left\vert y\right\vert ,
\end{eqnarray*}%
which is clearly impossible since $x_{1}>0$. This shows that $\mathcal{B}%
\left( x\right) =\infty $ for some $x\in \Omega $ as we claimed just before
the statement of Lemma \ref{dyadic restricted}.

In order to achieve the doubling property in the third line of the
conclusion of Lemma \ref{dyadic restricted}, we follow \cite{NaVo} by fixing 
$0<\tau <1$ and modifying the above proof as follows.

\begin{enumerate}
\item Replace $\mathcal{F}_{J;x}$ with $\mathcal{F}_{J;x,\tau }$ where $%
\mathcal{F}_{J;x,\tau }$ consist of those pairs $\left( U,V\right) $ of
positive functions on $J$ such that%
\begin{eqnarray*}
E_{J}U &=&x_{1},\ \ \ E_{J}V=x_{2}, \\
\frac{\left\vert \triangle _{I}U\right\vert }{E_{I}U} &\leq &\frac{\tau }{10}%
,\ \ \ \text{for all dyadic }I\subset J, \\
\frac{\left\vert \triangle _{I}V\right\vert }{E_{I}V} &\leq &\frac{\tau }{10}%
,\ \ \ \text{for all dyadic }I\subset J, \\
\text{and }\left( E_{I}U\right) \left( E_{I}V\right) &<&1,\ \ \ \ \ \text{%
for all }I\in \mathcal{D}^{0}\text{ with }I\subset J.
\end{eqnarray*}

\item Replace $\mathcal{B}\left( x\right) $ with $\mathcal{B}_{\tau }\left(
x\right) $ where 
\begin{equation*}
\mathcal{B}_{\tau }\left( x\right) \equiv \sup_{J\in \mathcal{D}^{0}}\left\{ 
\frac{1}{\left\vert J\right\vert }\sum_{I\in \mathcal{D}^{0}:\ I\subset
J}\left\vert \bigtriangleup _{I}V\right\vert \left( E_{I}U\right) \left\vert
I\right\vert :\left( U,V\right) \in \mathcal{F}_{J;x,\tau }\right\} ,\ \ \ \
\ \text{for }x\in \Omega .
\end{equation*}
\end{enumerate}

Then we obtain from the above argument that $\mathcal{B}_{\tau }\left(
x\right) =\infty $ for some $x\in \Omega $. Indeed, the analogue of Theorem %
\ref{quant bil} is now the following theorem.

\begin{theorem}
\label{quant bil tau}Assume that $\mathcal{B}_{\tau }\left( x\right) <\infty 
$ for all\ $x\in \Omega $. If $y=\left( y_{1},y_{2}\right) $ is such that $%
x,x+y,x-y\in \Omega $ and $\frac{2\left\vert y_{1}\right\vert }{x_{1}},\frac{%
2\left\vert y_{2}\right\vert }{x_{2}}\leq \frac{\tau }{10}$, then%
\begin{equation*}
\frac{\mathcal{B}_{\tau }\left( x+y\right) +\mathcal{B}_{\tau }\left(
x-y\right) }{2}+2\left\vert y_{2}\right\vert x_{1}-\mathcal{B}_{\tau }\left(
x\right) \leq 0.
\end{equation*}
\end{theorem}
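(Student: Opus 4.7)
The plan is to run the same Bellman argument as in Theorem \ref{quant bil}, with the same concatenation of $\eta$-maximizers, and check that the extra doubling-type constraints defining $\mathcal{F}_{J;x,\tau}$ are preserved. The hypothesis $\frac{2|y_1|}{x_1},\frac{2|y_2|}{x_2}\leq \frac{\tau}{10}$ is precisely what is needed to control the difference averages at the root interval of the concatenated pair.

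First, fix $J\in\mathcal{D}^0$ (say $J=I^0$) and, given $\eta>0$, choose $\eta$-maximizing pairs $(U_+,V_+)\in\mathcal{F}_{J_{x+y};x+y,\tau}$ and $(U_-,V_-)\in\mathcal{F}_{J_{x-y};x-y,\tau}$ for $\mathcal{B}_\tau(x+y)$ and $\mathcal{B}_\tau(x-y)$ respectively, which exist by hypothesis that $\mathcal{B}_\tau$ is finite on $\Omega$. Apply the rescaling maps $S_{a,b}$ of (\ref{rescale}) with $a=2^k$, $b=2^k\ell$ to transport these pairs to $J_+$ and $J_-$; note that $S_{a,b}$ preserves both $E_I$ and $\triangle_I$ on corresponding dyadic intervals, hence preserves the entire defining list of $\mathcal{F}_{\cdot;\cdot,\tau}$ as well as the sum $\mathcal{L}_J$. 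Define $\widetilde U$ and $\widetilde V$ on $J$ by pasting $U_\pm, V_\pm$ on $J_\pm$, exactly as in the proof of Theorem \ref{quant bil}.

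Next, verify that $(\widetilde U,\widetilde V)\in \mathcal{F}_{J;x,\tau}$. The averages $E_J\widetilde U=x_1$, $E_J\widetilde V=x_2$ follow from
\begin{equation*}
E_J\widetilde U=\tfrac12(E_{J_+}U_++E_{J_-}U_-)=\tfrac12\bigl((x_1+y_1)+(x_1-y_1)\bigr)=x_1,
\end{equation*}
and similarly for $\widetilde V$, and then $(E_J\widetilde U)(E_J\widetilde V)=x_1x_2<1$ since $x\in\Omega$. For every strict dyadic subinterval $I\subsetneq J$, the values of $E_I$ and $\triangle_I$ of $\widetilde U,\widetilde V$ coincide with those of $U_\pm,V_\pm$, so both the inequality $(E_I\widetilde U)(E_I\widetilde V)<1$ and the doubling bounds $|\triangle_I\widetilde U|/E_I\widetilde U,|\triangle_I\widetilde V|/E_I\widetilde V\leq \tau/10$ are inherited. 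The only new check is at $I=J$ itself, where
\begin{equation*}
\frac{|\triangle_J\widetilde U|}{E_J\widetilde U}=\frac{2|y_1|}{x_1}\leq \frac{\tau}{10},\qquad \frac{|\triangle_J\widetilde V|}{E_J\widetilde V}=\frac{2|y_2|}{x_2}\leq \frac{\tau}{10},
\end{equation*}
by the hypothesis on $y$. This is the crux of the matter, and the reason the theorem needs the size restriction on $y$.

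Finally, compute the linear functional as before, separating the contribution from $I=J$ from the contributions from $I\subset J_\pm$:
\begin{equation*}
\mathcal{B}_\tau(x)\geq \frac{1}{|J|}\mathcal{L}_J(\widetilde U,\widetilde V)=|\triangle_J\widetilde V|(E_J\widetilde U)+\tfrac12\cdot\tfrac{1}{|J_+|}\mathcal{L}_{J_+}(U_+,V_+)+\tfrac12\cdot\tfrac{1}{|J_-|}\mathcal{L}_{J_-}(U_-,V_-).
\end{equation*}
Since $|\triangle_J\widetilde V|\,E_J\widetilde U=2|y_2|x_1$, and the two rescaled sums exceed $\mathcal{B}_\tau(x+y)-\eta$ and $\mathcal{B}_\tau(x-y)-\eta$ respectively, we obtain
\begin{equation*}
\mathcal{B}_\tau(x)\geq 2|y_2|x_1+\tfrac12\bigl(\mathcal{B}_\tau(x+y)+\mathcal{B}_\tau(x-y)\bigr)-\eta.
\end{equation*}
Letting $\eta\downarrow 0$ yields the claimed concavity inequality. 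The single substantive step is the verification at the root interval $J$, and it succeeds exactly because of the assumed bound on $|y|$ relative to $x$; everything else is a direct transcription of the proof of Theorem \ref{quant bil}.
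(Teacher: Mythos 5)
Your proposal is correct and follows essentially the same route as the paper: the paper also observes that the concatenated pair $\left( \widetilde{U},\widetilde{V}\right) $ from the proof of Theorem \ref{quant bil} inherits the ratio bounds $\left\vert \triangle _{I}\cdot\right\vert /E_{I}\cdot\leq \tau /10$ on strict dyadic subintervals from $U_{\pm },V_{\pm }$, while at the root $I=J$ the ratio equals $2\left\vert y_{1}\right\vert /x_{1}$ (resp. $2\left\vert y_{2}\right\vert /x_{2}$), which is exactly what the extra hypothesis on $y$ controls, after which the functional computation is unchanged. You have correctly identified the single substantive new verification, and your write-up is if anything more explicit than the paper's.
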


The point of assuming $\frac{2\left\vert y_{1}\right\vert }{x_{1}},\frac{%
2\left\vert y_{2}\right\vert }{x_{2}}\leq \frac{\tau }{10}$ in the
hypotheses of Theorem \ref{quant bil tau} is that the weight pair $\left( 
\widetilde{U},\widetilde{V}\right) $ constructed in the proof of Theorem \ref%
{quant bil} above then satisfies $\frac{\left\vert \triangle _{I}\widetilde{U%
}\right\vert }{E_{I}\widetilde{U}}=\frac{2\left\vert y_{1}\right\vert }{x_{1}%
}\leq \frac{\tau }{10}$ and $\frac{\left\vert \triangle _{I}\widetilde{U}%
\right\vert }{E_{I}\widetilde{U}}\leq \frac{\tau }{10}$ for $I\subsetneqq J$%
, and similarly for $\widetilde{V}$, and so we have $\left( \widetilde{U},%
\widetilde{V}\right) \in \mathcal{F}_{J;x,\tau }$. The proof of Theorem \ref%
{quant bil tau} now proceeds as in the proof of Theorem \ref{quant bil}. The
remainder of the argument is unchanged.

This completes the proof of Lemma \ref{dyadic restricted} since one easily
verifies that if $\frac{\left\vert \triangle _{I}U\right\vert }{E_{I}U}\leq 
\frac{\tau }{10}\ $for all dyadic $I\in \mathcal{D}^{0}$, then $1-\tau <%
\frac{E_{I_{-}}U}{E_{I_{+}}U}<1+\tau ,\ \ \ \ \ $for all $I\in \mathcal{D}%
^{0}$, and similarly for $V$.

\begin{remark}
The above argument proves that if $\Omega $ is a domain in $\mathbb{R}^{n}$,
and $B:\Omega \rightarrow \left[ 0,\infty \right] $ is twice differentiable
at some $x\in \Omega $, then we cannot have%
\begin{equation*}
B\left( x\right) \geq \frac{B\left( x+y\right) +B\left( x-y\right) }{2}%
+2\left\vert y_{2}\right\vert x_{1},\ \ \ \ \ \text{for all }y\text{ such
that }x\pm y\in \Omega .
\end{equation*}%
This simple observation doesn't apply to the Bellman function for testing
conditions in \cite[see (3.1)-(3.4)]{NaVo}, since in particular, the
inequality for the three dimensional Bellman function in \cite[(3.13)]{NaVo}
has $y_{2}^{2}x_{1}$ in place of $2\left\vert y_{2}\right\vert x_{1}$:%
\begin{equation*}
B\left( x\right) +x_{2}\frac{\partial B}{\partial x_{3}}y_{1}^{2}\geq \frac{%
B\left( x+y\right) +B\left( x-y\right) }{2}+y_{2}^{2}x_{1}\ .
\end{equation*}%
Moreover, the two problems are quite different, as the conclusion in \cite[%
see (4.1)-(4.3) plus doubling]{NaVo} yields a Muckenhoupt doubling weight
pair that satisfies one testing condition for the dyadic Hilbert transform,
but not the other; while Theorem \ref{beta} above yields a Muckenhoupt
doubling weight pair that cannot satisfy either testing condition, since
they each imply bilinear testing.
\end{remark}

\begin{problem}
Is the Bilinear Cube Testing constant $\mathcal{BCT}_{H}\left( \sigma
,\omega \right) $ for the Hilbert transform $H$ controlled by $\mathcal{A}%
_{2}^{\alpha }\left( \sigma ,\omega \right) $ when the measures $\sigma
,\omega $ are doubling?
\end{problem}

\section{Appendix}

Here we complete the analysis of \emph{energy nondegeneracy} conditions,
introduced in \cite{Saw2}, which arise when using Calder\'{o}n-Zygmund
decompositions in connection with weighted Alpert wavelets. We begin by
recalling some notation from \cite{Saw2}. We say that a polynomial $P\left(
y\right) =\sum_{0\leq \left\vert \beta \right\vert <\kappa }c_{\beta
}y^{\beta }$ of degree less than $\kappa $ is \emph{normalized} if 
\begin{equation*}
\sup_{y\in Q_{0}}\left\vert P\left( y\right) \right\vert =1,\ \ \ \ \ \text{%
where }Q_{0}\equiv \dprod\limits_{i=1}^{n}\left[ -\frac{1}{2},\frac{1}{2}%
\right) .
\end{equation*}

\begin{definition}
\label{def Q norm}Denote by $c_{Q}$ the center of the cube $Q$, and by $\ell
\left( Q\right) $ its side length, and for any polynomial $P$ set 
\begin{equation*}
P^{Q}\left( y\right) \equiv P\left( c_{Q}+\ell \left( Q\right) y\right) .
\end{equation*}%
We say that $P\left( x\right) $ is $Q$\emph{-normalized} if $P^{Q}$ is
normalized. Denote by $\left( \mathcal{P}_{\kappa }^{Q}\right) _{\limfunc{%
norm}}$ the set of $Q$-normalized polynomials of degree less than $\kappa $.
\end{definition}

Thus a $Q$-normalized polynomial has its supremum norm on $Q$ equal to $1$.
Recall from (\ref{def doub}) that a locally finite positive Borel measure $%
\mu $ on $\mathbb{R}^{n}$ is \emph{doubling} if there exist constants $%
0<\beta ,\gamma <1$ such that%
\begin{equation}
\left\vert \beta Q\right\vert _{\mu }\geq \gamma \left\vert Q\right\vert
_{\mu },\ \ \ \ \ \text{for all cubes }Q\text{ in }\mathbb{R}^{n}.
\label{doub}
\end{equation}%
Note that $\sup_{y\in Q}\left\vert P\left( y\right) \right\vert =\left\Vert 
\mathbf{1}_{Q}P\right\Vert _{L^{\infty }\left( \mu \right) }$ for any cube $%
Q $, polynomial $P$, and nontrivial doubling measure $\mu $. It was shown in 
\cite{Saw2} that if $\mu $ is doubling on $\mathbb{R}^{n}$, then for every $%
\kappa \in \mathbb{N}$ there exists a positive constant $C_{\kappa }$ such
that%
\begin{eqnarray}
\left\vert Q\right\vert _{\mu } &\leq &C_{\kappa }\int_{Q}\left\vert P\left(
x\right) \right\vert ^{2}d\mu \left( x\right) ,\ \ \ \ \ \text{for all cubes 
}Q\text{ in }\mathbb{R}^{n}\text{,}  \label{energy nondeg} \\
&&\text{and for all }Q\text{-normalized polynomials }P\text{ of degree less
than }\kappa .  \notag
\end{eqnarray}%
It was also shown that conversely, if $\kappa >2n$, then (\ref{energy nondeg}%
) implies that $\mu $ is doubling. Here we extend the converse to the
optimal range $\kappa >1$.

\begin{lemma}
\label{doubling}Let $\mu $ be a locally finite positive Borel measure on $%
\mathbb{R}^{n}$. If (\ref{energy nondeg}) holds for some positive integer $%
\kappa \in \mathbb{N}$, then $\mu $ is doubling.
\end{lemma}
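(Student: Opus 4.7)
The plan is to prove the contrapositive: assume $\mu$ is \emph{not} doubling, and exhibit a cube $Q$ and a $Q$-normalized polynomial $P$ of degree less than $\kappa$ violating (\ref{energy nondeg}). Since larger $\kappa$ only enlarges the admissible test class (more normalized $P$'s means a stronger hypothesis), it suffices to treat the critical case $\kappa = 2$, where $P$ ranges over affine polynomials.

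The failure of doubling gives, for any $\beta \in (0,1)$ close to $1$ and any $\eta > 0$, a cube $Q_0$ with $|\beta Q_0|_\mu < \eta |Q_0|_\mu$, so at least $(1-\eta)|Q_0|_\mu$ of the mass lies in the thin annulus $Q_0 \setminus \beta Q_0$, which has width $(1-\beta)\ell(Q_0)/2$. Decomposing this annulus into its $2n$ face-adjacent slabs and pigeonholing, one slab $S \subset Q_0$, adjacent to a face $F$ of $Q_0$, carries at least a dimensional fraction $c_n \geq 1/(2n)$ of $|Q_0|_\mu$.

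The natural test polynomial is $P(x) = d(x, \mathrm{aff}(F))/\ell(Q_0)$, the signed distance to the hyperplane containing $F$, which is $Q_0$-normalized and satisfies $|P(x)| \leq (1-\beta)/2$ on $S$. Substituting
\[
\int_{Q_0} |P|^2 \, d\mu \;\leq\; \Bigl(\tfrac{1-\beta}{2}\Bigr)^{\!2}|S|_\mu + |Q_0 \setminus S|_\mu
\]
into (\ref{energy nondeg}) and rearranging yields
\[
|S|_\mu \;\leq\; \frac{1-1/C_\kappa}{1-(1-\beta)^2/4}\,|Q_0|_\mu,
\]
which, for $\beta$ close enough to $1$, is essentially $(1-1/C_\kappa)|Q_0|_\mu$. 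This already contradicts $|S|_\mu \geq c_n|Q_0|_\mu$ whenever $C_\kappa < 1/(1-c_n)$, giving the conclusion in a regime of small constants.

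The \textbf{main obstacle} is extending this to arbitrarily large $C_\kappa$, where the single-slab estimate is not sharp enough. My plan is to iterate the concentration: within the slab $S$, non-doubling again produces a sub-cube with a tighter concentration factor, and composing the test-polynomial estimates across finitely many scales amplifies the effective concentration as much as required. Equivalently, one may switch to an enlarged enclosing cube $\widetilde{Q} \supset Q_0$ with $\ell(\widetilde{Q}) \gg \ell(Q_0)$ and $|\widetilde{Q}|_\mu$ only a controlled multiple of $|Q_0|_\mu$ (using reverse doubling, which is itself a byproduct of (\ref{energy nondeg})); the concentrated region then has diameter $\ll \ell(\widetilde{Q})$, and an affine polynomial vanishing at its centroid makes $\int |P|^2 d\mu / |\widetilde{Q}|_\mu$ as small as desired. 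The most delicate point is how to transfer the non-doubling property of $\mu$ through the restriction/enlargement so that the iteration actually terminates with a sufficiently strong bound.
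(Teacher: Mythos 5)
Your first step is fine and, in the regime of small constants, essentially computes the same inequality the paper does: an affine polynomial vanishing on the hyperplane of a face is small on the thin slab adjacent to that face, so the hypothesis (\ref{energy nondeg}) forces a definite fraction of $\left\vert Q\right\vert _{\mu }$ to live away from that face. But the proof is not complete: everything hinges on the ``main obstacle'' you name, and neither of your two proposed fixes works as sketched. (i) Iterating the concentration inside the slab $S$ requires the negation of doubling to keep producing bad sub\-cubes \emph{inside} $S$ with compounding gains; non-doubling only supplies one bad cube per choice of parameters, located wherever it pleases, so the contrapositive framing gives you nothing to iterate on. (ii) The enlargement argument needs $\left\vert \widetilde{Q}\right\vert _{\mu }\lesssim \left\vert Q_{0}\right\vert _{\mu }$ with $\ell ( \widetilde{Q}) \gg \ell \left( Q_{0}\right) $, i.e. an \emph{upper} bound on the mass of a much larger cube by the mass of a smaller one; reverse doubling (even granting that it follows from (\ref{energy nondeg}), which you assert without proof) points in the opposite direction and gives no such bound --- for Lebesgue measure the ratio $\left\vert \widetilde{Q}\right\vert _{\mu }/\left\vert Q_{0}\right\vert _{\mu }$ grows like $(\ell (\widetilde{Q})/\ell (Q_{0}))^{n}$. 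Moreover the concentrated set $S$ is a slab of diameter comparable to $\ell \left( Q_{0}\right) $, not a set of small diameter, so an affine polynomial vanishing ``at its centroid'' is not small on all of $S$ unless $\ell ( \widetilde{Q}) $ is enormous, which is exactly when (ii) fails.

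The missing idea is to run the argument in the forward direction and to re-apply the hypothesis (\ref{energy nondeg}) to \emph{many} sub\-cubes, which is legitimate because the hypothesis is assumed for every cube (whereas non-doubling hands you only one witness cube). Concretely: for any cube $Q$ and any face, the one-slab estimate gives $\left( 1-\varepsilon \right) \left\vert Q\right\vert _{\mu }\leq C_{\kappa }\left\vert Q\setminus \text{slab}\right\vert _{\mu }$, where the slab has width $\left( 1-\beta \right) \ell \left( Q\right) $ and $\varepsilon =C_{\kappa }\left( 1-\beta \right) ^{2}$. The set $Q\setminus \text{slab}$ is a box, which one covers by $2^{n-1}$ cubes $Q^{\prime }$ of side $\beta \ell \left( Q\right) $; applying the same estimate to each $Q^{\prime }$ with the polynomial adapted to the next face trims a slab off a second face, at the multiplicative cost of a factor $2^{n-1}C_{\kappa }$. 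After cycling through all $2n$ faces one obtains $\left\vert Q\right\vert _{\mu }\leq \varepsilon \left( 1+\left[ 2^{n-1}C_{\kappa }\right] +\dots +\left[ 2^{n-1}C_{\kappa }\right] ^{2n-1}\right) \left\vert Q\right\vert _{\mu }+\left[ 2^{n-1}C_{\kappa }\right] ^{2n}\left\vert \beta ^{\prime }Q\right\vert _{\mu }$ for an explicit $\beta ^{\prime }<1$; choosing $\varepsilon $ (hence $\beta $) to absorb the first term yields (\ref{doub}) directly, with no restriction on the size of $C_{\kappa }$. This multiplicative, face-by-face localization is what replaces your contradiction argument in the large-constant regime, and it is the step your proposal leaves open.
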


\begin{proof}
Assume that (\ref{energy nondeg}) holds for some $\kappa \in \mathbb{N}$.
Momentarily fix a cube $Q$ and an index $1\leq i\leq n$, and let $a_{Q}\in 
\mathbb{R}^{n}$ where $Q=\dprod\limits_{i=1}^{n}\left[ \left( a_{Q}\right)
_{i},\left( a_{Q}\right) _{i}+\ell \left( Q\right) \right] $. Then the
polynomial 
\begin{equation*}
P_{i}\left( x\right) \equiv \frac{x_{i}-\left( a_{Q}\right) _{i}}{\ell
\left( Q\right) }
\end{equation*}%
is $Q$-normalized of degree less than $\kappa $, vanishes on the face of the
boundary of $Q$ which lies in the hyperplane $\left\{ x\in \mathbb{R}%
^{n}:x_{i}=\left( a_{Q}\right) _{i}\right\} $, and is $1$ on the opposite
face where $x_{i}=\left( a_{Q}\right) _{i}+\ell \left( Q\right) $. Thus for
each $0<\varepsilon <1$, there is $\beta <1$, sufficiently close to $1$, and 
\emph{independent} of the cube $Q$, so that%
\begin{eqnarray*}
\left\vert Q\right\vert _{\mu } &\leq &C_{\kappa }\int_{Q}\left\vert
P_{i}\right\vert ^{2}d\mu =C_{\kappa }\left\{ \int_{Q\cap \left\{ \frac{%
x_{i}-\left( a_{Q}\right) _{i}}{\ell \left( Q\right) }<1-\beta \right\}
}\left\vert P_{i}\right\vert ^{2}d\mu +\int_{Q\cap \left\{ \frac{%
x_{i}-\left( a_{Q}\right) _{i}}{\ell \left( Q\right) }\geq 1-\beta \right\}
}\left\vert P_{i}\right\vert ^{2}d\mu \right\} \\
&\leq &\varepsilon \left\vert Q\cap \left\{ \frac{x_{i}-\left( a_{Q}\right)
_{i}}{\ell \left( Q\right) }<1-\beta \right\} \right\vert _{\mu }+C_{\kappa
}\left\vert Q\cap \left\{ \frac{x_{i}-\left( a_{Q}\right) _{i}}{\ell \left(
Q\right) }\geq 1-\beta \right\} \right\vert _{\mu } \\
&\leq &\varepsilon \left\vert Q\right\vert _{\mu }+C_{\kappa }\left\vert
Q\cap \left\{ \frac{x_{i}-\left( a_{Q}\right) _{i}}{\ell \left( Q\right) }%
\geq 1-\beta \right\} \right\vert _{\mu }\ .
\end{eqnarray*}

Now we focus on the rectangle $Q\cap \left\{ \frac{x_{i}-\left( a_{Q}\right)
_{i}}{\ell \left( Q\right) }\geq 1-\beta \right\} $ that appears on the
right hand side above. It can be written as a union of at \ most $2^{n-1}$
cubes $Q^{\prime }\in \Gamma $ of side length $\beta \ell \left( Q\right) $
(thus not necessarily dyadic, and overlapping significantly - e.g. if $Q=%
\left[ 0,1\right] ^{2}$ and $i=1$, then the squares $Q^{\prime }$ are $\left[
1-\beta ,1\right] \times \left[ 0,\beta \right] $ and $\left[ 1-\beta ,1%
\right] \times \left[ 1-\beta ,1\right] $), where $\Gamma $ is an index set
of size $2^{n-1}$ satisfying%
\begin{equation*}
Q\cap \left\{ \frac{x_{i}-\left( a_{Q}\right) _{i}}{\ell \left( Q\right) }%
\geq 1-\beta \right\} =\dbigcup\limits_{Q^{\prime }\in \Gamma }Q^{\prime }.
\end{equation*}%
Now fix another index $j\neq i$, and for each of these cubes $Q^{\prime }$,
apply the above argument with the polynomial $P_{j}$ in place of $P_{i}$.
Then we obtain%
\begin{eqnarray*}
\left\vert Q\right\vert _{\mu } &\leq &\varepsilon \left\vert Q\right\vert
_{\mu }+C_{\kappa }\left\vert Q\cap \left\{ \frac{x_{i}-\left( a_{Q}\right)
_{i}}{\ell \left( Q\right) }\geq 1-\beta \right\} \right\vert _{\mu } \\
&\leq &\varepsilon \left\vert Q\right\vert _{\mu }+C_{\kappa
}\sum_{Q^{\prime }\in \Gamma }\left( \varepsilon \left\vert Q^{\prime
}\right\vert _{\mu }+C_{\kappa }\left\vert Q^{\prime }\cap \left\{ \frac{%
x_{j}-\left( a_{Q^{\prime }}\right) _{j}}{\ell \left( Q^{\prime }\right) }%
\geq 1-\beta \right\} \right\vert _{\mu }\right) \\
&\leq &\varepsilon \left( 1+2^{n-1}C_{\kappa }\right) \left\vert
Q\right\vert _{\mu }+2^{n-1}C_{\kappa }^{2}\left\vert Q\cap \left\{ \frac{%
x_{j}-\left( a_{Q}\right) _{j}}{\beta \ell \left( Q\right) }\geq 1-\beta 
\text{ and }\frac{x_{i}-\left( a_{Q}\right) _{i}}{\beta \ell \left( Q\right) 
}\geq 1-\beta \right\} \right\vert _{\mu },
\end{eqnarray*}%
where in the final term we have written $\ell \left( Q^{\prime }\right)
=\beta \ell \left( Q\right) $ and made the final set bigger by replacing $%
\left( a_{Q^{\prime }}\right) _{j}$ with the smaller number $\left(
a_{Q}\right) _{j}$. By further replacing the second factor of $2^{n-1}$ by
its square, we have%
\begin{equation*}
\left\vert Q\right\vert _{\mu }\leq \varepsilon \left( 1+\left[
2^{n-1}C_{\kappa }\right] \right) \left\vert Q\right\vert _{\mu }+\left[
2^{n-1}C_{\kappa }\right] ^{2}\left\vert Q\cap \left\{ \frac{x_{j}-\left(
a_{Q}\right) _{j}}{\ell \left( Q\right) }\geq \beta \left( 1-\beta \right) 
\text{ and }\frac{x_{i}-\left( a_{Q}\right) _{i}}{\ell \left( Q\right) }\geq
\beta \left( 1-\beta \right) \right\} \right\vert _{\mu }
\end{equation*}%
We continue this process until we have exhausted the indices in $\left\{
1,2,...,n\right\} $ and are left with cubes $Q^{\prime }$ that are at
distance at least $\beta ^{n-1}\left( 1-\beta \right) $ from each of the
hyperplanes $\left\{ x\in \mathbb{R}^{n}:x_{i}=\left( a_{Q}\right)
_{i}\right\} $ for $1\leq i\leq n$.

Next we turn our attention to the remaining $n$ faces of the boundary of $Q$%
, which lie in the hyperplanes $\left\{ x\in \mathbb{R}^{n}:x_{i}=\left(
a_{Q}\right) _{i}+\ell \left( Q\right) \right\} $ for $1\leq i\leq n$, using
the polynomials%
\begin{equation*}
\widehat{P}_{i}\left( x\right) \equiv \frac{\ell \left( Q\right) +\left(
a_{Q}\right) _{i}-x_{i}}{\ell \left( Q\right) }.
\end{equation*}%
We eventually obtain%
\begin{equation*}
\left\vert Q\right\vert _{\mu }\leq \varepsilon \left( 1+\left[
2^{n-1}C_{\kappa }\right] +...+\left[ 2^{n-1}C_{\kappa }\right]
^{2n-1}\right) \left\vert Q\right\vert _{\mu }+\left[ 2^{n-1}C_{\kappa }%
\right] ^{2n}\left\vert \beta ^{2n-2}\left( 1-\beta \right) ^{2}Q\right\vert
_{\mu }\ .
\end{equation*}%
Now we choose $\varepsilon =\frac{1}{2\left( 1+\left[ 2^{n-1}C_{\kappa }%
\right] +...+\left[ 2^{n-1}C_{\kappa }\right] ^{2n-1}\right) }$ to get 
\begin{equation*}
\left\vert Q\right\vert _{\mu }\leq 2\left[ 2^{n-1}C_{\kappa }\right]
^{2n}\left\vert \beta ^{2n-2}\left( 1-\beta \right) ^{2}Q\right\vert _{\mu
}\ ,
\end{equation*}%
which is (\ref{doub}) with $\gamma =\frac{1}{2C_{\kappa }^{2n}}$ and $\beta $
replaced by $\beta ^{2n-2}\left( 1-\beta \right) ^{2}$.
\end{proof}

\end{document}